\newtheorem{theorem}{Theorem}
\newtheorem{lemma}[theorem]{Lemma}
\theoremstyle{definition}
\theoremstyle{remark}
\numberwithin{equation}{section}
\numberwithin{theorem}{section}
\numberwithin{defn}{section}
\DeclareMathOperator{\sg}{sg}
\begin{document}
\title[Proofs of two conjectural identities on partial Nahm sums]{Proofs of two conjectural identities on partial Nahm sums}
\author{Changsong Shi and Liuquan Wang}

\address[C.\ Shi]{School of Mathematics and Statistics, Wuhan University, Wuhan 430072, Hubei, People's Republic of China}
\email{changsong@whu.edu.cn}

\address[L.\ Wang]{School of Mathematics and Statistics, Wuhan University, Wuhan 430072, Hubei, People's Republic of China}
\email{wanglq@whu.edu.cn;mathlqwang@163.com}

\subjclass[2010]{05A30, 11P84, 33D15, 33D45, 11F03, 11F27}

\keywords{Partial Nahm sums; Rogers--Ramanujan type identities; Bailey pairs; Hecke-type series; Appell--Lerch sums}

\begin{abstract}
Recently, Wang and Zeng investigated modularity of partial Nahm sums and discovered 14 modular families of such sums. They confirmed modularity for 13 families and proposed a conjecture consisting of two Rogers--Ramanujan type identities for the remaining family. We prove these conjectural identities in two steps. First, employing a transformation formula involving two Bailey pairs, we transform the partial Nahm sums into some specific Hecke-type series. Second, using two distinct approaches, we convert these Hecke-type series to the desired modular infinite products.
\end{abstract}
\maketitle

\section{Introduction and main results}
The famous Rogers--Ramanujan identities give beautiful factorization of two particular $q$-hypergeometric series:
\begin{align}\label{RR}
\sum_{n=0}^\infty \frac{q^{n^2}}{(1-q)(1-q^2)\cdots (1-q^n)}&=\prod\limits_{n=0}^\infty \frac{1}{(1-q^{5n+1})(1-q^{5n+4})}, \\
\sum_{n=0}^\infty \frac{q^{n^2+n}}{(1-q)(1-q^2)\cdots (1-q^n)}&=\prod\limits_{n=0}^\infty \frac{1}{(1-q^{5n+2})(1-q^{5n+3})}.
\end{align}
They were first discovered by Rogers \cite{Rogers1894} and later rediscovered by Ramanujan. Since then lots of similar sum-to-product $q$-series identities have been found and they were usually referred as Rogers--Ramanujan type identities. Such identities have important applications in combinatorics, number theory, Lie algebra and mathematical physics, etc.

One of the motivation to study Rogers--Ramanujan type identities comes from Nahm's problem \cite{Nahm07}.  The problem is to find all positive definite $r\times r$ rational matrix $A$, $r$-dimensional rational column vector $B$ and rational scalar $C$ such that the Nahm sum
\begin{align}\label{eq-full-Nahm}
f_{A,B,C}(q):=\sum_{n=(n_1,\dots,n_r)^\mathrm{T} \in \mathbb{Z}^r} \frac{q^{\frac{1}{2}n^\mathrm{T} An+n^\mathrm{T} B+C}}{(q;q)_{n_1}\cdots (q;q)_{n_r}}
\end{align}
is modular.  Here and throughout this paper, we adopt standard $q$-series notations:
\begin{align}
    (a;q)_\infty&:=\prod\limits_{k=0}^{\infty }(1-aq^k), \quad |q|<1, \\
    (a;q)_n&:=\frac{(a;q)_\infty}{(aq^n;q)_\infty}, \quad n\in \mathbb{Z}.
\end{align}
As a convention, we agree that $1/(q;q)_n=0$ for $n\in \mathbb{Z}_{<0}$. Sometimes we also use compressed notation
\begin{align}
    (a_1,a_2,\dots,a_m;q)_n:=\prod\limits_{k=1}^m(a_k;q)_n, \quad n \in \mathbb{Z}\cup \{\infty\}.
\end{align}
As usual, when $f_{A,B,C}(q)$ is modular we call $(A,B,C)$ a modular triple. Those modular Nahm sums are usually expected to be characters of certain 2-dimensional rational conformal field theories.

Around 2007, Zagier \cite{Zagier} studied Nahm's problem systematically. He proved that there are exactly seven rank one modular triples. He also provided 11 and 12 possible modular triples in rank two and three cases, respectively. Now the modularity of these candidates have all been confirmed mainly by the works of Zagier \cite{Zagier}, Vlasenko--Zwegers \cite{VZ}, Cherednik--Feigin \cite{Feigin}, Wang \cite{Wang-rank2,Wang-rank3} and Cao--Rosengren--Wang \cite{Cao-Rosengren-Wang}. The main idea in these works except \cite{Feigin} are to express Nahm sums as modular infinite products by establishing Rogers--Ramanujan type identities. In contrast, Cherednik--Feigin \cite{Feigin} proved the modularity of two rank two Nahm sums through nilpotent double affine Hecke algebras.

Recently, Wang and Zeng \cite{Wang-Zeng} extended Nahm's problem to partial Nahm sums. Let $A$ be a $r\times r$  nonzero rational symmetric matrix, $B$ a $r$-dimensional column vector, and $C$ a scalar. For any lattice $L\subseteq \mathbb{Z}^r$ and coset $v+L\in \mathbb{Z}^r/L$, they defined Nahm sum on lattice coset as
\begin{align}\label{eq-lattice-sum}
f_{A,B,C,v+L}(q):=\sum_{n=(n_1,\dots,n_r)^\mathrm{T} \in v+L} \frac{q^{\frac{1}{2}n^\mathrm{T} An+n^\mathrm{T} B+C}}{(q;q)_{n_1}\cdots (q;q)_{n_r}}
\end{align}
subject to the condition that the infinite series is absolutely convergent. Note that the sum is actually taken over the set $(v+L)\cap \mathbb{Z}_{\geq 0}^r$. In particular, when $L=\mathbb{Z}^r$,  $f_{A,B,C,v+L}(q)=f_{A,B,C}(q)$ is the full Nahm sum. Wang and Zeng \cite{Wang-Zeng} then proposed the problem to find all suitable $(A,B,C,v+L)$ such that $f_{A,B,C,v+L}(q)$ is modular.

In the case of rank $r=1$, Wang and Zeng \cite{Wang-Zeng} found eight modular partial Nahm sums by some known Rogers--Ramanujan type identities. In the case of rank $r=2$ and $L$ is one of the lattices $\mathbb{Z}(2,0)+\mathbb{Z}(0,1)$, $\mathbb{Z}(1,0)+\mathbb{Z}(0,2)$ or $\mathbb{Z}(2,0)+\mathbb{Z}(0,2)$, they found 14 types of symmetric matrices $A$ so that the partial Nahm sums $f_{A,B,C,v+L}(q)$ are modular for some vectors $B,v$ and scalars $C$. They established various Rogers--Ramanujan type identities to confirm the modularity of 13 families of these partial Nahm sums. For example, they proved the following identity \cite[Theorem 1.3]{Wang-Zeng}:
\begin{align}\label{eq-WZ}
       \sum_{i,j\ge0}\frac{q^{4ij+i+3j}}{(q;q)_{2i+1}(q;q)_{2j}}&=\frac{(q^8;q^8)_\infty}{(q;q^2)_\infty^2(q^4;q^8)_\infty}.
\end{align}
This implies that the partial Nahm sum $f_{A,B,C,v+L}(q)$ is modular for
\begin{equation}
\begin{split}
A=\begin{pmatrix} 0 & 1 \\  1  & 0 \end{pmatrix}, ~~  B=(1/2,1/2)^\mathrm{T}, ~~ C=-5/12,
 ~~ L=\mathbb{Z}(2,0)^\mathrm{T}+\mathbb{Z}(0,2)^\mathrm{T}
\end{split}
\end{equation}
and vector $v=(1,0)^\mathrm{T}$. After interchanging $i$ with $j$ this also confirms the modularity of $f_{A,B,C,v'+L}(q)$ with the vector $v$ replaced by $v'=(0,1)^\mathrm{T}$.

There is only one family of partial Nahm sums in the list of Wang and Zeng \cite{Wang-Zeng} whose modularity remains open. They are partial Nahm sums $f_{A,B_i,C_i,v_i+L}(q)$ correspond to
\begin{equation}\label{eq-quadruple}
\begin{split}
&A=\begin{pmatrix}
0 & 1/2 \\ 1/2 & 0
\end{pmatrix}, ~~L=\mathbb{Z}(2,0)^\mathrm{T}+\mathbb{Z}(0,2)^\mathrm{T}, \\
&B_1=(1/2,1/2)^\mathrm{T},~~ C_1=1/20, ~~v_1=(0,0)^\mathrm{T}, \\
&B_2=(0,1)^\mathrm{T}, ~~C_2=-21/20, ~~v_2=(1,0)^\mathrm{T}, \\
&B_3=(1,0)^\mathrm{T}, ~~ C_3=-21/20, ~~v_3=(0,1)^\mathrm{T}.
\end{split}
\end{equation}
Wang and Zeng \cite[Conjecture 3.3]{Wang-Zeng} proposed two conjectural identities (see \eqref{eq-conj-1} and \eqref{eq-conj-2}) which express these partial Nahm sums as modular infinite products. We will confirm their conjecture and hence we state it as a theorem.
\begin{theorem}\label{thm-main}
We have
    \begin{align}
        \sum_{i,j\ge0}\frac{q^{2ij+i+j}}{(q;q)_{2i}(q;q)_{2j}}&=\frac{1}{(q;q^2)_\infty^2(q^{2},q^{8};q^{10})_\infty},\label{eq-conj-1}\\
        \sum_{i,j\ge0}\frac{q^{2ij+i+3j}}{(q;q)_{2i+1}(q;q)_{2j}}&=\frac{1}{(q;q^2)_\infty^2(q^{4},q^{6};q^{10})_\infty}. \label{eq-conj-2}
    \end{align}
\end{theorem}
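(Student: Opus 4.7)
The overall strategy, following the abstract's sketch, proceeds in two stages: first convert each partial Nahm sum to a Hecke-type series, then evaluate that Hecke-type series as the stated modular product. For the first stage, I would search for Bailey pairs $(\alpha_n,\beta_n)$ and $(\alpha'_m,\beta'_m)$ compatible with the denominators $(q;q)_{2i}$ and $(q;q)_{2j}$ (or $(q;q)_{2i+1}$ for \eqref{eq-conj-2}). Such pairs are natural in Slater's and Bressoud's tables, since they arise in derivations of Rogers--Ramanujan-type identities via the Bailey lemma. With two such Bailey pairs in hand, a transformation of schematic form
$$\sum_{i,j \ge 0} q^{2ij+\lambda i + \mu j}\, \beta_i \beta'_j = (\text{theta factor}) \cdot \sum_{r,s \in \mathbb{Z}} q^{Q(r,s)}\, \alpha_r \alpha'_s$$
would convert the left-hand sides of \eqref{eq-conj-1} and \eqref{eq-conj-2} into Hecke-type series. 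Taking $(\lambda,\mu)=(1,1)$ targets \eqref{eq-conj-1}, while an index shift on one Bailey pair (to absorb $(q;q)_{2i+1}$) combined with $(\lambda,\mu)=(1,3)$ targets \eqref{eq-conj-2}.

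Because the typical $\alpha_n$ in Slater-type pairs is supported on a thin set with values $\pm q^{\text{quadratic}}$, substitution into the transformation above produces exactly a Hecke-type series: a sum over a region of $\mathbb{Z}^{2}$ of signed powers $\pm q^{Q(r,s)}$ for an indefinite quadratic form $Q$. The second stage identifies this series with the claimed infinite product. A useful observation is that
$$\frac{1}{(q^2,q^8;q^{10})_\infty} = \sum_{n\ge 0}\frac{q^{2n^2}}{(q^2;q^2)_n}, \qquad \frac{1}{(q^4,q^6;q^{10})_\infty} = \sum_{n\ge 0}\frac{q^{2n^2+2n}}{(q^2;q^2)_n},$$
i.e.\ the classical Rogers--Ramanujan identities at $q\mapsto q^{2}$, so the products on the right-hand side of \eqref{eq-conj-1}--\eqref{eq-conj-2} are encoded as Rogers--Ramanujan sums up to the prefactor $1/(q;q^{2})_\infty^{2}$. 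The Hecke-type series can then be evaluated along one of two routes: either via the Hickerson--Mortenson machinery, expressing it as a linear combination of Appell--Lerch sums and theta quotients and reducing the Appell--Lerch pieces to products through known identities, or by direct Bailey-type manipulation into a Rogers--Ramanujan sum. The abstract's reference to ``two distinct approaches'' strongly suggests that one identity is treated by each of these methods.

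The main obstacle, as I see it, lies in the evaluation of the Hecke-type series. The choice of Bailey pairs in the first step is forced by the need to land on a Hecke series recognizable by a theta-function identity, and then the reduction of an indefinite quadratic-form sum to a definite Rogers--Ramanujan product typically requires a clever cancellation or a well-chosen Appell--Lerch identity. Tracking signs, summation regions, and modular characters through the transformation to confirm that the final product is exactly $1/[(q;q^2)_\infty^{2}(q^{2},q^{8};q^{10})_\infty]$ (respectively $1/[(q;q^2)_\infty^{2}(q^{4},q^{6};q^{10})_\infty]$) is the most delicate part. The fact that two different methods are needed for the two closely related sums also indicates that no uniform evaluation is available, and that the second identity in particular is likely more resistant to direct reduction.
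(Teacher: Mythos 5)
Your outline reproduces the two-stage architecture that the paper itself announces in its abstract, but it executes neither stage, and the steps you defer are exactly where all the content of the proof lies. Concretely: (i) the Bailey pairs you need are not simply sitting in Slater's or Bressoud's tables. To apply the two-Bailey-pair transformation (Lovejoy's Theorem 1.2, whose left side is $\sum_{n,s\ge 0} a^s z^n q^{2ns+s}\beta_n(a;q)\beta'_s(z;q)$) to $\sum_{i,j}q^{2ij+i+j}/((q;q)_{2i}(q;q)_{2j})$, one needs a pair relative to $(1;q)$ and a pair relative to $(q;q)$, both with $\beta_n=1/(q;q)_{2n}$, and for the second identity a further pair relative to $(q^2;q)$ with $\beta_n=1/(q;q)_{2n+1}$. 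Only the first is Slater's A(1) pair; the other two are not in the literature and have to be extracted from Slater's finite identities (3.3) and (3.6) by regrouping the summands (Lemma \ref{3bailey}). (ii) The output of stage one is a specific combination of $f_{2,3,2}(-q^a,-q^b,q^3)$ terms plus theta corrections (Lemma \ref{lem-ST}), reached only after splitting the double sum over residues modulo $3$ and tracking numerous cancellations; your sketch gives no reason why the indefinite form that emerges should be one of the tractable ones. (iii) Your proposed endgame is off target. The identities $1/(q^2,q^8;q^{10})_\infty=\sum_{n\ge0}q^{2n^2}/(q^2;q^2)_n$ and its companion are correct (Rogers--Ramanujan at $q\mapsto q^2$) but idle here: you offer no mechanism for matching a sum over an indefinite binary quadratic form against a positive-definite single sum. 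What actually closes the gap is either the Hickerson--Mortenson expansion of $f_{2,3,2}$ into Appell--Lerch sums $m(x,q,z)$ followed by a battery of theta-quotient evaluations, or the Kim--Lovejoy Hecke-type expansions of $(q;q)_\infty^2/(q,q^4;q^5)_\infty$ and $(q;q)_\infty^2/(q^2,q^3;q^5)_\infty$ combined with the functional equations \eqref{f-id-0}--\eqref{f-id-2} for $f_{a,b,c}$.

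You have also misread the phrase ``two distinct approaches'': the paper gives two complete proofs, each of which handles both identities uniformly, not one method per identity; so your concluding inference that ``no uniform evaluation is available'' is unfounded. As it stands the proposal is a restatement of the strategy rather than a proof --- every step that could fail is left unverified.
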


The identities \eqref{eq-conj-1} and \eqref{eq-conj-2} look quite similar to \eqref{eq-WZ}, and one may wonder whether they can be proved in the same way. Wang and Zeng \cite{Wang-Zeng} were able to reduce the double sums in these identities to some single sums. However, unlike \eqref{eq-WZ}, it seems difficult to further convert the single sums reduced from \eqref{eq-conj-1} and \eqref{eq-conj-2} (see \cite[Eqs.\ (3.14) and (3.15)]{Wang-Zeng}) to infinite products. Here we use a different approach to prove \eqref{eq-conj-1} and \eqref{eq-conj-2}. Our proof consists mainly of two steps. For the first step, we will use the Bailey machinery (especially Lovejoy's transformation formula \cite[Theorem 1.2]{Lovejoy})  to transform the partial Nahm sums into certain Hecke-type series of the form
\begin{align}\label{f-defn}
f_{a,b,c}(x,y,q):=\sum_{\sg(r)=\sg(s)}\sg(r)(-1)^{r+s}x^ry^sq^{a\binom{r}{2}+brs+c\binom{s}{2}}.
\end{align}
Here $x,y\in \mathbb{C}^{*}:=\mathbb{C} \backslash \{0\}$ and $\sg(r):=1$ for $r\geq 0$ and $\sg(r):=-1$ for $r<0$. For the second step we have two different proofs. The first proof is to use the techniques developed in the work of Hickerson and Mortenson \cite{Hickerson-Mortenson} to further express these Hecke-type series as Appell--Lerch sums
\begin{align}\label{m-defn}
m(x,q,z):=\frac{1}{j(z;q)}\sum_{r=-\infty}^{\infty} \frac{(-1)^rq^{\binom{r}{2}}z^r}{1-q^{r-1}xz},
\end{align}
where $x,z\in \mathbb{C}^{*}$ with neither $z$ nor $xz$ an integral power of $q$ and
\begin{align}\label{j-defn}
    j(z;q):=(z,q/z,q;q)_\infty.
\end{align}
Then we will be able to convert these expressions to the conjectured infinite products. The second proof is to convert the Hecke-type series obtained in the first step to some other Hecke-type series whose modularity is known.

We remark here that our proofs are quite different from proofs for many other Rogers--Ramanujan type identities in the literature. While in \cite{Wang-rank2,Wang-rank3,Wang-Zeng} usually only one Bailey pair is invoked per time, here for the first step we used two Bailey pairs simultaneously.  Another distinction arises from the second step. As in most cases, after using Bailey pairs we usually arrive at some theta functions which can be easily expressed as infinite products and are clearly modular. In contrast, this time we only obtain Hecke-type series and need rather more complicated techniques to convert them to modular infinite products. It is well-known that Hecke-type series and Appell--Lerch sums play a prominent role in the theory of mock theta functions. However, it is relatively rare to see their usage in proving Rogers--Ramanujan type identities.

The rest of this paper is organized as follows. In Section \ref{sec-pre} we introduce some notation and basic knowledge about Bailey pairs as well as properties about Hecke-type series and Appell--Lerch sums. Section \ref{sec-proof} is devoted to giving proofs for Theorem \ref{thm-main}. We will first transform the double sums to some Hecke-type series, and then we present two different proofs to show that these Hecke-type series agree with the desired modular infinite products.

\section{Preliminaries}\label{sec-pre}

The Jacobi triple product identity asserts that
\begin{align}\label{eq-Jacobi}
j(z;q) =\sum_{n=-\infty}^{\infty} (-1)^nq^{\binom{n}{2}}z^n.
\end{align}
As some special cases, we let $a$ and $m$ be rational numbers with $m$ positive and define
\begin{align*}
J_{a,m}:=j(q^a;q^m), \quad \overline{J}_{a,m}:=j(-q^a;q^m) \,\, \textrm{and} \,\, J_{m}:=J_{m,3m}=(q^m;q^m)_\infty.
\end{align*}

If $(\alpha_n(a;q),\beta_n(a;q))$ satisfies
    \begin{align}\label{BP-defn}
        \beta_n(a;q)=\sum_{i=0}^n\frac{\alpha_i(a;q)}{(q;q)_{n-i}(aq;q)_{n+i}}, \quad \forall n\geq 0,
    \end{align}
then we say that $(\alpha_n(a;q),\beta_n(a;q))$ is a Bailey pair relative to $(a;q)$.

For the purpose of proving Theorem \ref{thm-main}, we need three Bailey pairs.
\begin{lemma}\label{3bailey}
The following $(\alpha_n(a;q),\beta_n(a;q))$ are Bailey pairs relative to $(a;q)$ ($a\in \{1,q,q^2\}$):
    \begin{align}
    &\begin{cases}
        \beta_n(1;q)=\frac{1}{(q;q)_{2n}}, \\
        \alpha_0(1;q)=1,  \label{BP1} \\ \alpha_{3n-1}(1;q)=-q^{6n^2-5n+1},\ \alpha_{3n}(1;q)=q^{6n^2-n}+q^{6n^2+n},\ \alpha_{3n+1}(1;q)=-q^{6n^2+5n+1};
        \end{cases}\\
    &\begin{cases}
        \beta_n(q;q)=\frac{1}{(q;q)_{2n}}, \\
        \alpha_0(q;q)=1, \label{BP2}\\
        \alpha_{3n-1}(q;q)=\frac{-q^{6n^2-5n+1}+q^{6n^2+n}}{1-q},\ \alpha_{3n}(q;q)=\frac{q^{6n^2-n}-q^{6n^2+5n+1}}{1-q},\ \alpha_{3n+1}(q;q)=0;
    \end{cases}\\
    &\begin{cases}
        \beta_n(q^2;q)=\frac{1}{(q;q)_{2n+1}}, \\
        \alpha_0(q^2;q)=\frac{1}{1-q},\\
        \alpha_{3n-1}(q^2;q)=0,\ \alpha_{3n}(q^2;q)=\frac{q^{6n^2+n}-q^{6n^2+7n+2}}{(1-q)(1-q^2)},\ \alpha_{3n+1}(q^2;q)=\frac{-q^{6n^2+5n+1}+q^{6n^2+11n+5}}{(1-q)(1-q^2)}. \label{BP3}
    \end{cases}
    \end{align}
\end{lemma}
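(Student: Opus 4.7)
The plan is to verify, for each of the three pairs \eqref{BP1}, \eqref{BP2}, \eqref{BP3}, the defining identity \eqref{BP-defn} by direct computation. Two complementary strategies are available.

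\textbf{Strategy A (forward check).} Start from the explicit $\alpha_i(a;q)$ and compute
\begin{align*}
R_n(a;q):=\sum_{i=0}^n \frac{\alpha_i(a;q)}{(q;q)_{n-i}(aq;q)_{n+i}}.
\end{align*}
Since the claimed $\alpha_i(a;q)$ is piecewise-defined on the residues $i\equiv 0,\pm1\pmod 3$, split $R_n(a;q)$ accordingly into three subsums. After reindexing, their summands take the shape $q^{6k^2\pm k}$ and $q^{6k^2\pm 5k+1}$ (together with shifted variants for $a=q$ and $a=q^2$). These are the generic terms of theta series of type $j(q^{5};q^{12})$, $j(q^{7};q^{12})$, $j(q;q^{12})$, $j(q^{11};q^{12})$ by the Jacobi triple product \eqref{eq-Jacobi}, so that $R_n(a;q)=\beta_n(a;q)$ becomes a truncated theta-function identity. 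It can then be verified by induction on $n$, using only the basic recurrence $(q;q)_{m+1}=(1-q^{m+1})(q;q)_m$ and repeated cancellation of theta tails.

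\textbf{Strategy B (Bailey inversion).} If $(\alpha_n,\beta_n)$ is a Bailey pair relative to $(a;q)$, then
\begin{align*}
\alpha_n(a;q)=\frac{1-aq^{2n}}{1-a}\sum_{k=0}^n\frac{(-1)^{n-k}(a;q)_{n+k}\,q^{\binom{n-k}{2}}}{(q;q)_{n-k}}\,\beta_k(a;q).
\end{align*}
Substitute $\beta_k(a;q)=1/(q;q)_{2k}$ for \eqref{BP1} and \eqref{BP2}, and $\beta_k(q^2;q)=1/(q;q)_{2k+1}$ for \eqref{BP3}. The resulting finite $q$-hypergeometric sums are balanced and terminating; applying Watson's $q$-analog of Whipple's transformation (a very-well-poised ${}_8\phi_7$ collapsing to a ${}_4\phi_3$) and then the Jacobi triple product yields precisely the three-term periodic theta-structured closed forms claimed in the statement. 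For $a=1$ a short regularization is needed: isolate the $n=0$ contribution (which gives $\alpha_0(1;q)=1$) and pass to the limit in the $1/(1-a)$ factor for $n\ge 1$.

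The main obstacle in either route is the theta-identification step: showing that, after splitting into residue classes mod $3$ and reindexing, the three finite sums combine into the precise pattern $q^{6n^2\pm n}$ and $\pm q^{6n^2\pm 5n+1}$ with the correct signs and constants (and the analogous patterns for $a=q,q^2$). This is precisely where the Jacobi triple product and the standard Bailey/Watson machinery do the heavy lifting. As a cross-check I would match these pairs against Slater's catalog of Bailey pairs, Lovejoy's list, and the McLaughlin--Sills--Zimmer compendium, since variants of all three already appear there and can be obtained by parameter specialization or a Bailey-lattice shift from well-known base cases with $\beta_n=1/(q;q)_{2n}$ or $\beta_n=1/(q;q)_{2n+1}$.
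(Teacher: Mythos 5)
Your plan correctly identifies what must be checked, namely the defining relation \eqref{BP-defn}, but in both strategies the decisive step is asserted rather than carried out, and that step is exactly where all the difficulty sits. In Strategy A, the claim that the truncated theta identity ``can then be verified by induction on $n$'' using only $(q;q)_{m+1}=(1-q^{m+1})(q;q)_m$ and ``cancellation of theta tails'' is not substantiated: the identity needed for \eqref{BP1} and \eqref{BP2} is a finite Rogers--Ramanujan-type identity, and you exhibit neither the recurrence satisfied by the truncated sums nor the induction step. In Strategy B, the inversion formula is correct and the $a\to 1$ regularization remark is apt, but you do not show that the inverted sum with $\beta_k=1/(q;q)_{2k}$ is (a limit of) a very-well-poised series to which Watson's transformation applies, nor that the resulting ${}_{4}\phi_{3}$ collapses to the claimed three-term pattern $q^{6n^2\pm n}$, $-q^{6n^2\pm 5n+1}$ supported on residues mod $3$. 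As written, the proposal is an outline with a gap at its core.

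The gap is easy to close, and your closing remark about Slater's catalog points at the route the paper actually takes. The finite identities you would need are literally Slater's Eqs.\ (3.3) and (3.6), reproduced as \eqref{Slater-id-1} and \eqref{Slater-id-2}, so no new summation or induction is required. The only idea in the paper's proof is a regrouping: the same identity \eqref{Slater-id-1} that encodes \eqref{BP1} (denominators read as $(q;q)_{n-i}(q;q)_{n+i}$, i.e.\ relative to $a=1$) also encodes \eqref{BP2} once one writes $(q;q)_{n+i+1}=(1-q)(q^2;q)_{n+i}$ and re-reads the denominators as $(q;q)_{n-i}(q^2;q)_{n+i}$, i.e.\ relative to $a=q$; likewise \eqref{Slater-id-2}, divided by $1-q$ and with $(q^2;q)_{n+i+1}=(1-q^2)(q^3;q)_{n+i}$, yields \eqref{BP3} relative to $a=q^2$. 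If you insist on a self-contained argument you would still have to prove \eqref{Slater-id-1} and \eqref{Slater-id-2}, which amounts to executing your Strategy B for a single base case; but for the lemma as stated, citing Slater and regrouping the denominator factors is both sufficient and far shorter than either of your proposed routes.
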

The Bailey pair \eqref{BP1} appeared as the A(1) Bailey pair in Slater's list \cite[p.~463]{Slater}. We did not find a reference for \eqref{BP2} and \eqref{BP3}, but we can derive them directly from Slater's results in the following way.
\begin{proof}
Recall that Slater justified the Bailey pair \eqref{BP1} through the identity \cite[Eq.\ (3.3)]{Slater}:
\begin{align}\label{Slater-id-1}
   \frac{1}{(q;q)_{2n}}=\sum_{r=-[n/3]}^{[n/3]}\frac{(1-q^{6r+1})q^{6r^2-r}}{(q;q)_{n-3r}(q;q)_{n+3r+1}}.
\end{align}
This can  be rewritten as
\begin{align}
  \frac{1}{(q;q)_{2n}}&=\sum_{r=0}^{[n/3]}\frac{(1-q^{6r+1})q^{6r^2-r}}{(1-q)(q;q)_{n-3r}(q^2;q)_{n+3r}} \nonumber \\
  &\quad +\sum_{r=1}^{[n/3]}\frac{(1-q^{-6r+1})q^{6r^2+r}}{(1-q)(q;q)_{n-(3r-1)}(q^2;q)_{n+3r-1}}.
\end{align}
Comparing this with the definition \eqref{BP-defn}, we see that \eqref{BP2} is indeed a Bailey pair.

Recall another identity from \cite[Eq.\ (3.6)]{Slater}:
\begin{align}\label{Slater-id-2}
    \sum_{r=-[n/3]}^{[n/3]}\frac{(1-q^{6r+2})q^{6r^2+r}}{(q^2;q)_{n+3r+1}(q;q)_{n-3r}}=\frac{1}{(q^2;q)_{2n}}.
\end{align}
Multiplying both sides by $1/(1-q)$, we rewrite it as
\begin{align}
\frac{1}{(q;q)_{2n+1}}&=\sum_{r=0}^{[n/3]}\frac{(1-q^{6r+2})q^{6r^2+r}}{(1-q)(1-q^2)(q;q)_{n-3r}(q^3;q)_{n+3r}} \nonumber \\
&\quad +\sum_{r=1}^{[n/3]}\frac{(1-q^{-6r+2})q^{6r^2-r}}{(1-q)(1-q^2)(q;q)_{n-(3r-2)}(q^3;q)_{n+3r-2}}.
\end{align}
Comparing this with the definition \eqref{BP-defn}, we see that \eqref{BP3} is indeed a Bailey pair.
\end{proof}

The following result found from Lovejoy's work \cite{Lovejoy} will play a key role in the first step of proving Theorem \ref{thm-main}.
\begin{lemma}\label{lem-lovejoy}
    (Cf.\ \cite[Theorem\ 1.2]{Lovejoy}.) If $(\alpha_n(a;q), \beta_n(a;q))$ is a Bailey pair relative to $(a, q)$ and $(\alpha_n^{\prime}(z;q), \beta_n^{\prime}(z;q))$ is a Bailey pair relative to $(z,q)$, then
    \begin{align}
        &\sum_{n, s \geq 0} a^s z^n q^{2 n s+s} \beta_n(a;q) \beta_s^{\prime}(z;q)\nonumber\\
        &=\frac{1}{(a q, z;q)_{\infty}} \sum_{n, r \geq 0} \frac{a^n z^r q^{2 n r+n}(1-z)}{1-z q^{2 n}} \alpha_r(a;q) \alpha_n^{\prime}(z;q).
    \end{align}
\end{lemma}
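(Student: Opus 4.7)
The plan is to reduce the transformation to a single pure $q$-hypergeometric identity and then prove that identity directly.

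\textbf{Reduction.} Substitute the defining relations
\[
\beta_n(a;q)=\sum_{i=0}^n\frac{\alpha_i(a;q)}{(q;q)_{n-i}(aq;q)_{n+i}}, \qquad \beta_s'(z;q)=\sum_{j=0}^s\frac{\alpha_j'(z;q)}{(q;q)_{s-j}(zq;q)_{s+j}}
\]
into the LHS of the lemma and interchange the order of summation so that $i$ and $j$ appear as the outermost indices, collecting the coefficients of $\alpha_i(a;q)\alpha_j'(z;q)$. Relabelling the indices on the RHS via $r\leftrightarrow i$, $n\leftrightarrow j$, the two sides agree if and only if, for every $i,j\geq 0$,
\[
\sum_{n\geq i,\,s\geq j}\frac{a^s z^n q^{2ns+s}}{(q;q)_{n-i}(aq;q)_{n+i}(q;q)_{s-j}(zq;q)_{s+j}}=\frac{a^j z^i q^{2ij+j}(1-z)}{(aq;q)_\infty(z;q)_\infty(1-zq^{2j})}.
\]
After the change of variables $n=i+m$, $s=j+\ell$, the substitutions $u=aq^{2i}$, $w=zq^{2j}$, and the identities $(aq;q)_\infty=(aq;q)_{2i}(uq;q)_\infty$, $(z;q)_\infty=(1-z)(zq;q)_\infty$ with $(zq;q)_\infty=(zq;q)_{2j}(wq;q)_\infty$, all $i,j$-dependent factors cancel and the problem further collapses to the single identity
\[
\sum_{m,\ell\geq 0}\frac{u^\ell w^m q^{2m\ell+\ell}}{(q;q)_m(uq;q)_m(q;q)_\ell(wq;q)_\ell}=\frac{1}{(uq;q)_\infty(w;q)_\infty}.
\]

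\textbf{The inner identity.} I would attempt to prove the reduced double sum by expanding $1/(uq;q)_m$ and $1/(wq;q)_\ell$ via the $q$-binomial theorem, collecting coefficients of $u^a w^b$, and matching them against the known expansion of $1/((uq;q)_\infty(w;q)_\infty)=\sum_{a,b\geq 0} q^a u^a w^b/((q;q)_a(q;q)_b)$, using a $q$-Chu--Vandermonde-type summation to perform the finite inner sums. A more conceptual alternative is to iteratively apply the standard Bailey lemma: treating the $m$-sum first with the unit Bailey pair $(\alpha_n=\delta_{n,0},\beta_n=1/(q;q)_n)$ rewrites the inner sum in a form in which the $\ell$-sum becomes a $q$-exponential series; the coupling factor $q^{2m\ell}$ is exactly what permits this two-stage application of the Bailey transform.

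The main obstacle is the coupling factor $q^{2m\ell}$, which prevents any naive factorisation of the inner double sum into a product of two independent $q$-exponentials. Resolving this coupling is the whole nontrivial content of Lovejoy's theorem; all remaining steps (substitution, index shifting, cancellation of prefactors) are routine bookkeeping. Once the reduced identity is established, tracing back through the change of variables and Bailey pair substitutions immediately recovers the transformation formula as stated.
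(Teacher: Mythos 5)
First, a point of comparison: the paper does not prove this lemma at all --- it is quoted from Lovejoy \cite[Theorem 1.2]{Lovejoy} and used as a black box --- so your attempt is not paralleling an internal argument but supplying one the authors deliberately outsourced. That said, your reduction step is correct. Extracting the coefficient of $\alpha_i(a;q)\alpha_j'(z;q)$ is legitimate (it amounts to proving the lemma for the special Bailey pairs $\alpha_n=\delta_{n,i}$, $\beta_n=1/\bigl((q;q)_{n-i}(aq;q)_{n+i}\bigr)$ and recovering the general case by linearity), and your substitutions $n=i+m$, $s=j+\ell$, $u=aq^{2i}$, $w=zq^{2j}$ really do cancel every $i,j$-dependent factor, collapsing the lemma to the single identity
\begin{equation*}
\sum_{m,\ell\geq 0}\frac{u^{\ell}w^{m}q^{2m\ell+\ell}}{(q;q)_{m}(uq;q)_{m}(q;q)_{\ell}(wq;q)_{\ell}}=\frac{1}{(uq;q)_{\infty}(w;q)_{\infty}},
\end{equation*}
which I have checked is true to low order in $u$ and $w$.

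The problem is that the proposal ends exactly where the theorem begins: this identity is never proved. You offer two strategies in the conditional mood and then concede that ``resolving this coupling is the whole nontrivial content of Lovejoy's theorem,'' which is an accurate self-assessment but not a proof. Neither sketch survives scrutiny as written. Expanding $1/(uq;q)_m$ and $1/(wq;q)_\ell$ by the $q$-binomial theorem and comparing coefficients of $u^aw^b$ leaves a terminating \emph{double} sum in which the argument of the inner sum depends on the outer summation index through the factor $q^{2m\ell}$; a single $q$-Chu--Vandermonde does not evaluate it, and you would need to identify and justify a second summation. The ``iterated Bailey lemma'' alternative is likewise underspecified: the classical Bailey lemma inserts the kernel $a^nq^{n^2}$, not the kernel $z^nq^{2ns}$ that appears here, and after performing the $m$-sum the remaining $\ell$-sum is not visibly a $q$-exponential series. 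Lovejoy's actual argument runs the Bailey transform against a purpose-built conjugate Bailey pair $\gamma_n=\sum_{r\geq n}\delta_r/\bigl((q;q)_{r-n}(aq;q)_{r+n}\bigr)$ whose closed-form evaluation is precisely the step you have left open. Until that (or an equivalent summation) is carried out, the argument is a correct change of variables attached to an unproven core identity.
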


As a key to the second step of proving Theorem \ref{thm-main}, we will use some useful properties about the functions $f_{a,b,c}(x,y,q)$ and $m(x,q,z)$ from the work of Hickerson and Mortenson \cite{Hickerson-Mortenson}. It is straightforward to see by definition that
\begin{align}
    f_{a,b,c}(x,y,q^m)&=f_{am,bm,cm}(x,y,q),~~ m\in \mathbb{Z}_{>0},\label{f-base} \\
    f_{a,b,a}(x,y,q)&=f_{a,b,a}(y,x,q). \label{f-symmetry}
\end{align}

Following \cite{Hickerson-Mortenson}, the term ``generic'' will be used to mean that the parameters do not cause poles in the Appell--Lerch sums or in the quotients of theta functions.
\begin{lemma}\label{f-m-expan}
(Cf.\ \cite[Corollary 8.2]{Hickerson-Mortenson}.) Let $\ell\in \mathbb{Z}$. For generic $x,y\in \mathbb{C}^*$
\begin{align}\label{eq-f232}
f_{2,3,2}(x,y,q)=&\sum_{r=0}^1 \Big[\frac{x^r}{q^{r^2}y^r}j(q^ry;q^2)m\Big(\frac{q^{6-5r}x^2}{y^3},q^{10},\frac{q^{2\ell}y^2}{x^2}\Big)\nonumber \\
&+\frac{y^r}{q^{r^2}x^r}j(q^rx;q^2)m\Big(\frac{q^{6-5r}y^2}{x^3},q^{10},\frac{x^2}{q^{2\ell}y^2}\Big)\Big].
\end{align}
\end{lemma}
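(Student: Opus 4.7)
The plan is to specialize the general Hickerson--Mortenson expansion of indefinite Hecke-type series to the triple $(a,b,c) = (2,3,2)$. The discriminant $D := b^2 - ac = 5$ governs the whole shape of the formula: every Appell--Lerch sum that appears must have base $q^{2D} = q^{10}$, and since $a = c = 2$, the expansion decomposes into two pairs of contributions, one pair per residue class $r \in \{0,1\}$ of a summation index modulo $a$. The symmetry $f_{2,3,2}(x,y,q) = f_{2,3,2}(y,x,q)$ from \eqref{f-symmetry} interchanges the two members of each pair, producing the two summands inside the bracket of \eqref{eq-f232}.

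Concretely, the first step is to split the series $f_{2,3,2}(x,y,q)$ in \eqref{f-defn} by writing one of the summation indices in the form $r = 2R + d$ with $d \in \{0,1\}$. Completing the square in the quadratic form $2\binom{r}{2} + 3rs + 2\binom{s}{2}$ after this substitution produces an expression in which the inner $s$-summation, via the Jacobi triple product \eqref{eq-Jacobi}, contributes the theta factor $j(q^d y; q^2)$, leaving behind an indefinite sum whose quadratic exponent has leading coefficient $10$ in the remaining index. This residual sum is precisely of the shape handled by the standard Hickerson--Mortenson conversion identity, which rewrites any such indefinite sum as $m(\cdot, q^{10}, z)$ for an essentially arbitrary generic $z$. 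The freedom in $z$ is the parameter $\ell$ in \eqref{eq-f232}, realized by $z = q^{2\ell} y^2 / x^2$; independence from the integer $\ell$ follows from the standard shift relation $m(x,q,qz) = m(x,q,z)$ plus an explicit theta correction that cancels across the two terms of each pair.

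The second step is to apply the analogous splitting with the roles of $r$ and $s$ exchanged, which via \eqref{f-symmetry} produces the companion contribution with $x$ and $y$ interchanged and with the third argument of $m$ inverted to $x^2 / (q^{2\ell} y^2)$. Assembling the four contributions (two residue classes times two members of each symmetric pair) delivers \eqref{eq-f232}.

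The main obstacle is not conceptual but a careful bookkeeping check: one must verify that, after all the index shifts and the completion of the square, the residue-dependent scalar prefactors emerge exactly as $x^r / (q^{r^2} y^r)$ (respectively $y^r/(q^{r^2} x^r)$), and that the first argument of each Appell--Lerch sum simplifies on the nose to $q^{6-5r} x^2 / y^3$ (respectively $q^{6-5r} y^2 / x^3$). Both identifications arise from the linear piece of the quadratic form after the shift and from tracking precisely how the $r = 1$ contribution differs from the $r = 0$ contribution by a single shift of the interior index; this is the calculation carried out in \cite[\S8]{Hickerson-Mortenson}, whose result we may simply invoke.
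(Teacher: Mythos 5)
The paper offers no proof of this lemma beyond the citation to \cite[Corollary 8.2]{Hickerson-Mortenson}, and your proposal likewise ends by invoking that result, so the two approaches coincide. Your preliminary sketch of why the $(2,3,2)$, $D=5$ specialization yields base-$q^{10}$ Appell--Lerch sums indexed by residues modulo $a=2$ is a reasonable gloss, though the actual mechanism in \cite{Hickerson-Mortenson} is subtler than a direct Jacobi-triple-product evaluation of the inner sum (the constraint $\sg(r)=\sg(s)$ couples the two indices, which is precisely what forces Appell--Lerch sums rather than theta functions to appear); since you defer those details to the cited source, exactly as the paper does, this is immaterial.
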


\begin{lemma}\label{lem-f-id}
(Cf.\ \cite[Proposition 6.2 and Corollary 6.4]{Hickerson-Mortenson}.) We have
\begin{align}
f_{a,b,c}(x,y,q)&=-\frac{q^{a+b+c}}{xy}f_{a,b,c}(q^{2a+b}/x,q^{2c+b}/y,q), \label{f-id-0} \\
    f_{a,b,c}(x,y,q)&=-yf_{a,b,c}(q^bx,q^cy,q)+j(x;q^a), \label{f-id-1} \\
    f_{a,b,c}(x,y,q)&=-xf_{a,b,c}(q^ax,q^by,q)+j(y;q^c). \label{f-id-2}
\end{align}
\end{lemma}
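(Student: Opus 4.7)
The plan is to derive all three identities by straightforward index manipulation in the defining double series \eqref{f-defn}.

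For \eqref{f-id-0}, I would split $f_{a,b,c}(x,y,q)=P(x,y)+N(x,y)$ along the two branches allowed by $\sg(r)=\sg(s)$: the positive branch $P$ with $r,s\ge 0$, and the negative branch $N$ with $r,s<0$. In $N$ I apply the substitution $r\mapsto -m-1$, $s\mapsto -n-1$ with $m,n\ge 0$. The elementary identities $\binom{-m-1}{2}=\binom{m}{2}+2m+1$ and $(-m-1)(-n-1)=mn+m+n+1$ reorganize the exponent of $q$ as $a\binom{m}{2}+bmn+c\binom{n}{2}+(2a+b)m+(2c+b)n+(a+b+c)$, and combining this with $x^{-m-1}y^{-n-1}$ together with the sign $\sg(r)(-1)^{r+s}=-(-1)^{m+n}$ yields $N(x,y)=-\tfrac{q^{a+b+c}}{xy}P(q^{2a+b}/x,q^{2c+b}/y)$. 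Applying the same substitution to $P(q^{2a+b}/x,q^{2c+b}/y)$ produces the dual relation $P(x,y)=-\tfrac{q^{a+b+c}}{xy}N(q^{2a+b}/x,q^{2c+b}/y)$, and adding the two gives \eqref{f-id-0}.

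For \eqref{f-id-1}, I would absorb the shifts inside $f_{a,b,c}(q^b x, q^c y, q)$ into the quadratic exponent via the rewriting $a\binom{r}{2}+brs+c\binom{s}{2}+br+cs = a\binom{r}{2}+br(s+1)+c\binom{s+1}{2}$, and then re-index $s\mapsto s-1$. This expresses $-y f_{a,b,c}(q^bx,q^cy,q)$ as the same double series as $f_{a,b,c}(x,y,q)$, except that the constraint $\sg(r)=\sg(s)$ is replaced by $\sg(r)=\sg(s-1)$. The two constraints coincide for all $s\neq 0$ and differ only along the line $s=0$, where the original series contributes the terms with $r\ge 0$ while the shifted one contributes those with $r<0$. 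Careful bookkeeping of the factor $\sg(r)$ on each boundary piece collapses the net difference into $\sum_{r\in\mathbb{Z}}(-1)^r q^{a\binom{r}{2}}x^r$, which equals $j(x;q^a)$ by the Jacobi triple product \eqref{eq-Jacobi}.

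The identity \eqref{f-id-2} follows by the same argument with the roles of $r$ and $s$ interchanged, or equivalently from the elementary symmetry $f_{a,b,c}(x,y,q)=f_{c,b,a}(y,x,q)$, obtained by swapping the summation indices (the constraint $\sg(r)=\sg(s)$ is symmetric in $r,s$), followed by an application of \eqref{f-id-1}. None of the three identities uses any deep input; the only real care required is the bookkeeping of the sign factor $\sg(r)$ along the boundary $s=0$ (respectively $r=0$), and that boundary is precisely where the residual Jacobi theta function $j(x;q^a)$ (respectively $j(y;q^c)$) is generated.
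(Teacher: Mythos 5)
Your proof is correct. The paper itself gives no argument for this lemma --- it is quoted directly from Hickerson--Mortenson \cite[Proposition 6.2 and Corollary 6.4]{Hickerson-Mortenson} --- and your elementary derivation is essentially the one found there: the substitution $(r,s)\mapsto(-m-1,-n-1)$ on the negative quadrant for \eqref{f-id-0}, the absorption $brs+br+cs+c\binom{s}{2}=br(s+1)+c\binom{s+1}{2}$ followed by the shift $s\mapsto s-1$ for \eqref{f-id-1} (where the mismatch of the constraints $\sg(r)=\sg(s)$ and $\sg(r)=\sg(s-1)$ along $s=0$ correctly assembles, with the $\sg(r)$ weights, into the full bilateral sum $\sum_{r\in\mathbb{Z}}(-1)^rq^{a\binom{r}{2}}x^r=j(x;q^a)$), and the symmetry $f_{a,b,c}(x,y,q)=f_{c,b,a}(y,x,q)$ for \eqref{f-id-2}. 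All the exponent and sign computations you sketch check out.
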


\begin{lemma}\label{lem-m-prop}
(Cf.\ \cite[Proposition\ 3.1]{Hickerson-Mortenson}.) For generic $x,z\in \mathbb{C}^{*}$
\begin{align}
m(x,q,z)&=m(x,q,qz), \label{m-id-1} \\
m(x,q,z)&=x^{-1}m(x^{-1},q,z^{-1}), \label{m-id-2} \\
m(qx,q,z)&=1-xm(x,q,z). \label{m-id-3}
\end{align}
\end{lemma}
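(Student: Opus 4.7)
The plan is to derive all three identities directly from the definition \eqref{m-defn} of $m(x,q,z)$, treating the Lambert-type series
$$S(x,z):=\sum_{r=-\infty}^{\infty}\frac{(-1)^r q^{\binom{r}{2}}z^r}{1-q^{r-1}xz}$$
as the basic object, so that $m(x,q,z)=S(x,z)/j(z;q)$. The only auxiliary facts I will need are the Jacobi triple product identity \eqref{eq-Jacobi} and the two quasi-periodicity relations $j(qz;q)=-z^{-1}j(z;q)$ and $j(z^{-1};q)=-z^{-1}j(z;q)$, both of which fall out immediately from the product form $j(z;q)=(z,q/z,q;q)_\infty$ by factoring out $(1-z)$ and $(1-z^{-1})$ respectively.

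For \eqref{m-id-1}, I would start from $m(x,q,qz)$, collect the factor $(qz)^r$ in the numerator into $q^{\binom{r+1}{2}}z^r$, then reindex by $r\mapsto r-1$. The numerator becomes $-z^{-1}\cdot(-1)^r q^{\binom{r}{2}}z^r$ while the denominator returns to $1-q^{r-1}xz$, giving $S(x,qz)=-z^{-1}S(x,z)$. Pairing this with $j(qz;q)=-z^{-1}j(z;q)$ cancels the factor $-z^{-1}$ and yields $m(x,q,qz)=m(x,q,z)$. For \eqref{m-id-2}, I would apply the substitution $r\mapsto -r$ inside $m(x^{-1},q,z^{-1})$, use $\binom{-r}{2}=\binom{r}{2}+r$, and then multiply each summand by $-q^{r+1}xz/(-q^{r+1}xz)$ to convert the denominator $1-q^{-r-1}x^{-1}z^{-1}$ into $1-q^{r+1}xz$; after a further shift $r\mapsto r-2$ and using $\binom{r}{2}+2r+1=\binom{r+2}{2}$, one reads off $S(x^{-1},z^{-1})=-xz^{-1}S(x,z)$. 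Dividing by $j(z^{-1};q)=-z^{-1}j(z;q)$ gives $m(x^{-1},q,z^{-1})=x\,m(x,q,z)$, which is \eqref{m-id-2}.

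For \eqref{m-id-3}, the cleanest route is to compute the combination $m(qx,q,z)+x\,m(x,q,z)$ directly. After shifting $r\mapsto r-1$ in $m(qx,q,z)$ so that both series share the denominator $1-q^{r-1}xz$, the numerators combine into $(-1)^{r-1}q^{\binom{r-1}{2}}z^{r-1}\bigl[1-q^{r-1}xz\bigr]$, using the elementary identity $\binom{r}{2}-\binom{r-1}{2}=r-1$. The denominator cancels, leaving a bare theta-like sum $\sum_r (-1)^{r-1}q^{\binom{r-1}{2}}z^{r-1}$, which by \eqref{eq-Jacobi} equals exactly $j(z;q)$. Dividing by $j(z;q)$ then gives $m(qx,q,z)+x\,m(x,q,z)=1$, which is \eqref{m-id-3}.

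No step is genuinely hard; the only potential pitfall is bookkeeping — keeping signs, index shifts, and the two quasi-periodicity factors straight. The most delicate manipulation is the denominator-flipping move in \eqref{m-id-2}, where multiplying by a carefully chosen $-q^{r+1}xz$ converts a negative-power pole into the standard form; the cancellation between numerator and denominator in \eqref{m-id-3} is the conceptual heart of the argument, since it is what produces the telescoping theta series from a pair of otherwise unrelated Appell--Lerch sums.
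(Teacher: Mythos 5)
Your proof is correct. Note that the paper itself offers no proof of this lemma --- it simply cites \cite[Proposition~3.1]{Hickerson-Mortenson} --- so your contribution is a self-contained verification rather than an alternative to an argument in the text. The verification you give is essentially the standard one and all the key computations check out: for \eqref{m-id-1}, $\binom{r}{2}+r=\binom{r+1}{2}$ together with the shift $r\mapsto r-1$ gives $S(x,qz)=-z^{-1}S(x,z)$, matching $j(qz;q)=-z^{-1}j(z;q)$; for \eqref{m-id-2}, the identities $\binom{-r}{2}=\binom{r}{2}+r$ and $\binom{r}{2}+2r+1=\binom{r+2}{2}$ make the denominator-flip and the shift $r\mapsto r-2$ land exactly on $S(x^{-1},z^{-1})=-xz^{-1}S(x,z)$, which divided by $j(z^{-1};q)=-z^{-1}j(z;q)$ yields $m(x^{-1},q,z^{-1})=x\,m(x,q,z)$; and for \eqref{m-id-3}, the factorization of the combined numerator as $(-1)^{r-1}q^{\binom{r-1}{2}}z^{r-1}\bigl(1-q^{r-1}xz\bigr)$ via $\binom{r}{2}-\binom{r-1}{2}=r-1$ correctly telescopes the poles away and leaves $j(z;q)/j(z;q)=1$ by \eqref{eq-Jacobi}. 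The termwise recombination in \eqref{m-id-3} is legitimate because both bilateral series converge absolutely for generic parameters (for $r\to-\infty$ the denominators grow like $|q|^{r-1}$ while $q^{\binom{r}{2}}\to 0$), so no rearrangement issue arises. The only cosmetic remark: in \eqref{m-id-2} you could avoid the two-step shift by aiming directly for the index $s=r+2$ in the target denominator $1-q^{s-1}xz$, but this changes nothing of substance.
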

From \eqref{m-id-2} and \eqref{m-id-3} we deduce that
\begin{align}
m(x,q,z^{-1})=1-m(q/x,q,z). \label{m-id-4}
\end{align}

\begin{lemma}\label{lem-m-minus}
(Cf.\ \cite[Theorem 3.3]{Hickerson-Mortenson}.) For generic $x,z_0,z_1\in \mathbb{C}^{*}$
\begin{align}
m(x,q,z_1)-m(x,q,z_0)=\frac{z_0J_1^3j(z_1/z_0;q)j(xz_0z_1;q)}{j(z_0;q)j(z_1;q)j(xz_0;q)j(xz_1;q)}. \label{m-minus}
\end{align}
\end{lemma}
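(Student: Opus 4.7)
The plan is to view both sides of \eqref{m-minus} as meromorphic functions of the variable $z_1\in\mathbb{C}^{*}$, with $x,z_0$ and $q$ held fixed and generic, and to prove equality via a standard theta-Liouville argument. Concretely, I will verify (i) that both sides are invariant under $z_1\mapsto qz_1$, (ii) that both sides have simple poles only in the two orbits $z_1\in q^{\mathbb{Z}}$ and $z_1\in q^{\mathbb{Z}}/x$, (iii) that the residues agree at $z_1=1/x$, and (iv) that both sides vanish at $z_1=z_0$. By the residue theorem on the elliptic curve $\mathbb{C}^{*}/q^{\mathbb{Z}}$, the sum of residues of the difference on a fundamental domain is zero, so (iii) forces the residue at $z_1=1$ to vanish as well; the difference is then holomorphic and $q$-elliptic, hence constant, and (iv) identifies the constant as $0$.

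For (i), the invariance of the LHS is immediate from \eqref{m-id-1}. For the RHS, the theta transformation rule $j(qz;q)=-z^{-1}j(z;q)$ (a direct consequence of \eqref{j-defn}) applied to the four $z_1$-dependent theta factors yields numerator multiplier $(-z_0/z_1)(-1/(xz_0z_1))=1/(xz_1^{2})$ and denominator multiplier $(-1/z_1)(-1/(xz_1))=1/(xz_1^{2})$, which cancel.

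For (ii) and (iii), the RHS manifestly has poles only where $j(z_1;q)j(xz_1;q)$ vanishes, while the LHS inherits its $z_1$-poles from the denominator $j(z_1;q)$ of $m(x,q,z_1)$ and from the factors $1-q^{r-1}xz_1$ in its defining sum; both pole sets are simple and lie in $q^{\mathbb{Z}}\cup q^{\mathbb{Z}}/x$. At $z_1=1/x$, the LHS residue is carried entirely by the single $r=1$ summand $-z_1/(1-xz_1)$, which contributes $1/x^{2}$, and dividing by $j(1/x;q)=-x^{-1}j(x;q)$ yields $-1/(x\,j(x;q))$. On the RHS, the leading-order expansion $j(w;q)\sim -J_1^{3}(w-1)$ as $w\to 1$ handles the vanishing of $j(xz_1;q)$ at $z_1=1/x$, and after simplifying with $j(1/w;q)=-w^{-1}j(w;q)$ one recovers the same value $-1/(x\,j(x;q))$.

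Finally, for (iv), the LHS is trivially $0$ at $z_1=z_0$, and the RHS contains the factor $j(z_1/z_0;q)=j(1;q)=0$ there. Thus the constant is $0$ and \eqref{m-minus} follows. The main obstacle I anticipate is the bookkeeping in the residue calculation at $z_1=1/x$, involving several coordinated applications of $j(1/w;q)=-w^{-1}j(w;q)$ and the local expansion $j(w;q)\sim -J_1^{3}(w-1)$ at $w=1$; the key conceptual shortcut is that the residue theorem on $\mathbb{C}^{*}/q^{\mathbb{Z}}$ removes the need to directly evaluate the less tractable residue of $m(x,q,z_1)$ at $z_1=1$, which would otherwise require summing the bilateral series $\sum_r(-1)^r q^{\binom{r}{2}}/(1-q^{r-1}x)$ in closed form.
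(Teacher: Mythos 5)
Your argument is correct, but be aware that the paper contains no proof of this lemma to compare against: it is imported verbatim from Hickerson and Mortenson \cite[Theorem 3.3]{Hickerson-Mortenson}, as the ``Cf.'' indicates. What you have written is essentially the classical elliptic-function proof of that theorem (going back to Appell and Lerch, and also how the cited source treats it): both sides, viewed as functions of $z_1$, are invariant under $z_1\mapsto qz_1$, have at most simple poles on the two orbits $q^{\mathbb{Z}}$ and $q^{\mathbb{Z}}/x$, their residues at $z_1=1/x$ agree (your value $-1/(x\,j(x;q))$ checks out on both sides, via $j(1/w;q)=-w^{-1}j(w;q)$ and $j(w;q)\sim -J_1^{3}(w-1)$), and the difference vanishes at $z_1=z_0$. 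Two points of precision are worth fixing. First, the residue theorem on $\mathbb{C}^{*}/q^{\mathbb{Z}}$ applies to the $1$-form $D(z_1)\,dz_1/z_1$, not to the function $D$ itself, so what vanishes is the weighted sum $\sum_a a^{-1}\mathrm{Res}_{z_1=a}D$; since the contribution at $a=1/x$ is zero this still forces the residue at $a=1$ to vanish, which is all you need, but you should state the form explicitly. Second, you should record the genericity actually used: $x\notin q^{\mathbb{Z}}$ (so the two pole orbits are distinct and all poles are simple) and $z_0, xz_0\notin q^{\mathbb{Z}}$ (so the evaluation at $z_1=z_0$ in your step (iv) is legitimate). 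With these caveats your proof is complete and self-contained, which is more than the paper offers here, at the cost of reproving a standard result that the authors deliberately outsource.
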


\section{Proofs of Theorem \ref{thm-main}}\label{sec-proof}

To make our proof more readable, we extract some steps as separate lemmas. Throughout this section we denote
\begin{align}
S(q)&:=(q;q)_\infty^2 \sum_{i,j\geq 0} \frac{q^{2ij+i+j}}{(q;q)_{2i}(q;q)_{2j}},\\
T(q)&:=(q;q)_{\infty}^2\sum_{i,j\geq 0}\frac{q^{2ij+i+3j}}{(q;q)_{2i+1}(q;q)_{2j}}.
\end{align}
Then \eqref{eq-conj-1} and \eqref{eq-conj-2} are equivalent to
\begin{align}\label{S-product}
    S(q^{\frac{1}{2}})=J_1J_{2,5}
\end{align}
and
\begin{align}\label{T-product}
    T(q^{\frac{1}{2}})=J_1J_{1,5},
\end{align}
respectively.

We first express $S(q)$ and $T(q)$ as Hecke-type series.
\begin{lemma}\label{lem-ST}
We have
\begin{align}
S(q^{\frac{1}{2}})&=f_{2,3,2}(-q^4,-q^4,q^3)+2f_{2,3,2}(-q^4,-q^5,q^3) +f_{2,3,2}(-q^5,-q^8,q^3) \nonumber \\
&\quad \quad  -\overline{J}_{1,6}-2\overline{J}_{2,6}, \label{S-lem} \\
T(q^{\frac{1}{2}})&=f_{2,3,2}(-q^7,-q^5,q^3)-qf_{2,3,2}(-q^7,-q^8,q^3)+q^{-1}f_{2,3,2}(-q,-q^4,q^3) \nonumber \\
    &\quad \quad +f_{2,3,2}(-q^5,-q^5,q^3)-(1+q^{-1})\overline{J}_{1,6}. \label{T-lem}
\end{align}
\end{lemma}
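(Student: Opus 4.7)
The plan is to apply Lovejoy's two-Bailey-pair transformation (Lemma~\ref{lem-lovejoy}) to each of $S(q)$ and $T(q)$, using the Bailey pairs furnished by Lemma~\ref{3bailey}, and then to reorganise the resulting double series as Hecke sums $f_{2,3,2}(\cdot,\cdot,q^3)$ plus some boundary theta-function corrections. Matching the LHS exponent $a^sz^nq^{2ns+s}$ of Lovejoy against the summand of $S(q)$ (with $(n,s)=(j,i)$) pins down $a=1$ and $z=q$, so I invoke \eqref{BP1} paired with \eqref{BP2}; the prefactor $(aq,z;q)_\infty=(q;q)_\infty^2$ then cancels cleanly against the $(q;q)_\infty^2$ out in front of $S(q)$, leaving
\[
S(q)=(1-q)\sum_{n,r\ge 0}\frac{q^{2nr+n+r}}{1-q^{2n+1}}\,\alpha_r(1;q)\,\alpha_n(q;q).
\]
For $T(q)$ the analogous matching (with $(n,s)=(i,j)$) yields $a=q^2$ and $z=q$, calling for \eqref{BP3} together with \eqref{BP2}; here $(aq,z;q)_\infty=(q^3,q;q)_\infty$ leaves an extra factor $(1-q)(1-q^2)$, which is absorbed exactly by the $(1-q)(1-q^2)$ and $(1-q)$ sitting in the denominators of $\alpha_r(q^2;q)$ and $\alpha_n(q;q)$.

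The next step is to substitute the explicit $\alpha$-values and split both outer indices into residue classes modulo three, writing $r=3K+\varepsilon$ and $n=3L+\delta$ with $\varepsilon,\delta\in\{-1,0,1\}$. Because $\alpha_{3k+1}(q;q)$ and $\alpha_{3k-1}(q^2;q)$ vanish, only a restricted subset of the nine $(\varepsilon,\delta)$-cases contributes. In each surviving case the combined $q$-exponent takes the shape $6K^2+18KL+6L^2+(\text{linear in }K,L)+(\text{const})$; after rescaling $q\mapsto q^{1/2}$ this matches the quadratic signature $3K^2+9KL+3L^2$ intrinsic to $f_{2,3,2}(\cdot,\cdot,q^3)$, while the residual linear and constant pieces (together with the $\pm$ signs inherited from the $\alpha$'s) are absorbed into the two arguments $x,y=-q^{\ast}$. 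The symmetry \eqref{f-symmetry} identifies pairs of $(\varepsilon,\delta)$-cases that differ only by $x\leftrightarrow y$; their combination produces, for instance, the coefficient $2$ appearing in front of $f_{2,3,2}(-q^4,-q^5,q^3)$ in \eqref{S-lem}.

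Lovejoy's transformation delivers only the $\mathbb{Z}_{\ge 0}^2$ portion of each Hecke sum, whereas $f_{2,3,2}$ also includes the ``third-quadrant'' $\mathbb{Z}_{<0}^2$ part. To close the argument I would complete each positive-quadrant sum to a full Hecke sum by inserting and subtracting the missing piece; after the substitution $(K,L)\mapsto(-K-1,-L-1)$ that piece factorises into a product of two theta-type single sums, which collapse via the Jacobi triple product \eqref{eq-Jacobi} to factors of the form $\overline{J}_{a,6}$. With signs carefully tallied across the surviving $(\varepsilon,\delta)$-cases, these should combine to exactly $-\overline{J}_{1,6}-2\overline{J}_{2,6}$ for $S(q^{1/2})$ and $-(1+q^{-1})\overline{J}_{1,6}$ for $T(q^{1/2})$. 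The main obstacle is purely clerical: tracking all the case splits, the anomalous behaviour at $K=0$ (where $\alpha_0$ does not obey the general $\alpha_{3k}$ formula), and the interplay between the $\pm$ signs from the $\alpha$'s and the $-1$'s absorbed into the arguments of $f_{2,3,2}$, so that the final expressions match \eqref{S-lem} and \eqref{T-lem} on the nose.
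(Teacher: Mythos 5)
Your first two steps coincide with the paper's own proof: the same pairings of Bailey pairs (\eqref{BP1} with \eqref{BP2} for $S$, and \eqref{BP3} with \eqref{BP2} for $T$) are fed into Lemma \ref{lem-lovejoy}, the infinite-product prefactors cancel exactly as you describe, and the outer indices are split modulo $3$ so that the exponents acquire the quadratic form $6K^2+18KL+6L^2$. The gap is in your final assembly step. First, it is not true that the Bailey machinery delivers only the $\mathbb{Z}_{\ge 0}^2$ portion of each Hecke sum: every $\alpha$-value is a signed combination of two monomials, so each residue class $(\varepsilon,\delta)$ produces positive-quadrant double sums \emph{with signs}, and those carrying an overall minus sign become, after the reflection $(K,L)\mapsto(-K,-L)$, precisely the third-quadrant portions $-\sum_{K,L\le -1}$ demanded by the definition \eqref{f-defn}. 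For instance, in the computation of $S$ the class $(n,r)\equiv(0,0)$ supplies the positive quadrants of $f_{2,3,2}(-q^4,-q^4,q^3)$ and $f_{2,3,2}(-q^4,-q^5,q^3)$, while the class $(2,0)$ supplies their negated third quadrants; the surviving sub-sums pair off into the four complete Hecke sums of \eqref{S-lem}. Second, the completion device you propose cannot work as stated: the missing piece $\sum_{K,L\le -1}\sg(K)(-1)^{K+L}x^K y^L q^{3K^2+9KL+3L^2+\cdots}$ does \emph{not} factor into a product of two single theta sums, because the cross term $9KL$ survives the substitution $(K,L)\mapsto(-K-1,-L-1)$; there is no Jacobi-triple-product collapse available, and completing each positive-quadrant sub-sum separately would leave you with several intractable third-quadrant remainders rather than the terms $-\overline{J}_{1,6}-2\overline{J}_{2,6}$.

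The theta corrections in fact come from the boundary, not from the third quadrant: one must treat separately the $n=0$ and $r=0$ lines of the Lovejoy output (where $\alpha_0$ is anomalous, as you note), and also add back the $K=0$ and $L=0$ edges when enlarging $\sum_{K,L\ge 1}$ to the full quadrant $\sum_{K,L\ge 0}$ inside each $f_{2,3,2}$. These are genuine single theta series and combine, via \eqref{eq-Jacobi}, into $-\overline{J}_{1,6}-2\overline{J}_{2,6}$ for $S(q^{1/2})$ and $-(1+q^{-1})\overline{J}_{1,6}$ for $T(q^{1/2})$. A further point your sketch glosses over is the factor $(1-z)/(1-zq^{2n})=(1-q)/(1-q^{2n+1})$ on the right-hand side of Lemma \ref{lem-lovejoy}: the computation relies on the numerators of $\alpha_n(q;q)$ and $\alpha_n(q^2;q)$ being divisible by the corresponding $1-q^{2n+1}$ (e.g.\ $q^{6n^2-n}-q^{6n^2+5n+1}=q^{6n^2-n}(1-q^{6n+1})$), which is what reduces everything to pure monomial double sums in the first place. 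Until the pairing of signed quadrants and the bookkeeping of boundary terms are carried out correctly, the identities \eqref{S-lem} and \eqref{T-lem} are not established.
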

\begin{proof}
(1) Applying  Lemma \ref{lem-lovejoy} with the two Bailey pairs in \eqref{BP1} and \eqref{BP2}, we deduce that
\begin{align}\label{S-begin}
 &S(q)=(q;q)_{\infty}^2\sum_{n,s\geq 0}q^{2ns+s+n}\beta_n(1;q)\beta_s(q;q) \nonumber \\
&=\sum_{n,r\geq 0}\frac{q^{2nr+n+r}(1-q)\alpha_r(1;q)\alpha_n(q;q)}{1-q^{2n+1}}.
\end{align}
We decompose $S(q)$ into three parts according to $n=0$, $n\geq 1$ and $r=0$, and $n,r\geq 1$, respectively. Namely, we define
\begin{align}
    A(q)&:=\sum_{r\geq 0}q^r\alpha_r(1;q),\\
    B(q)&:=\sum_{n\geq 1}\frac{q^n(1-q)\alpha_n(q;q)}{1-q^{2n+1}},\\
    F_{a,b}(q)&:=\sum_{\substack{n,r\geq 1\\ n\equiv a\ (\mathrm{mod}\ 3)\\ r\equiv b\ (\mathrm{mod}\ 3)}}\frac{q^{2nr+n+r}(1-q)\alpha_n(q;q)\alpha_r(1;q)}{1-q^{2n+1}}.
\end{align}
Then \eqref{S-begin} can be written as
\begin{align}\label{S=A+B+F}
    S(q)=A(q)+B(q)+\sum_{0\leq a,b\leq 2}F_{a,b}(q).
\end{align}

Now we calculate the series  $A(q)$, $B(q)$ and $F_{a,b}(q)$ separately. We have
\begin{align}\label{A-exp}
    &A(q)=1+\sum_{r\geq 1}q^{3r}(q^{6r^2+r}+q^{6r^2-r})+\sum_{r \geq 1}q^{3r-1}(-q^{6r^2-5r+1})+\sum_{r\geq 0}q^{3r+1}(-q^{6r^2+5r+1})\nonumber\\
    &=1+\sum_{r\geq 1}\Big(q^{6r^2+4r}+q^{6r^2+2r}-q^{6r^2-2r}-q^{6r^2-4r}\Big)
\end{align}
where we replaced $r$ by $r-1$ for the last term.

For the series $B(q)$, we have
\begin{align}\label{B-exp}
    &B(q)=\sum_{n\geq1}\frac{q^n(1-q)\alpha_n(q;q)}{1-q^{2n+1}}=\sum_{n\geq 1}\Big(\frac{q^{6n^2+2n}-q^{6n^2+8n+1}}{1-q^{6n+1}}+\frac{-q^{6n^2-2n}+q^{6n^2+4n-1}}{1-q^{6n-1}}\Big)\nonumber\\
    &=\sum_{n\geq 1}\big(q^{6n^2+2n}-q^{6n^2-2n}\big).
\end{align}

For the series $F_{a,b}(q)$, we have
\begin{align}
 &F_{1,0}(q)=F_{1,1}(q)=F_{1,2}(q)=0, \label{F-zero} \\
   & F_{0,0}(q)=\sum_{n,r\geq 1}\frac{q^{18nr+3n+3r}(q^{6r^2-r}+q^{6r^2+r})(q^{6n^2-n}-q^{6n^2+5n+1})}{1-q^{6n+1}}\nonumber\\
    &=\sum_{n,r\geq 1}q^{6n^2+6r^2+18nr+2n+2r}(1+q^{2r}), \label{F00}\\
   & F_{0,1}(q)=\sum_{\substack{n,r\geq 1}}\frac{q^{18nr-9n+3r-2}(-q^{6r^2-7r+2})(q^{6n^2-n}-q^{6n^2+5n+1})}{1-q^{6n+1}}\nonumber\\
    &=-\sum_{n,r\geq 1}q^{6n^2+6r^2+18nr-4r-10n}, \label{F01}\\
   & F_{0,2}(q)=\sum_{n,r\geq 1}\frac{q^{18nr-3n+3r-1}(-q^{6r^2-5r+1})(q^{6n^2-n}-q^{6n^2+5n+1})}{1-q^{6n+1}}\nonumber\\
    &=-\sum_{n,r\geq 1}q^{6n^2+6r^2+18nr-2r-4n}, \label{F02}\\
    &F_{2,0}(q)=\sum_{n,r\geq 1}\frac{q^{18nr+3n-3r-1}(q^{6r^2-r}+q^{6r^2+r})(-q^{6n^2-5n+1}+q^{6n^2+n})}{1-q^{6n-1}}\nonumber\\
    &=-\sum_{n,r\geq 1}q^{6n^2+6r^2+18nr-2n-4r}(1+q^{2r}), \label{F10} \\
   & F_{2,1}(q)=\sum_{\substack{n,r\geq 1}}\frac{q^{18nr-9n-3r+1}(-q^{6r^2-7r+2})(-q^{6n^2-5n+1}+q^{6n^2+n})}{1-q^{6n-1}}\nonumber\\
    &=\sum_{n,r\geq 1}q^{6n^2+6r^2+18nr-14n-10r+4}=\sum_{\substack{n\geq 1\\r\geq 0}}q^{6n^2+6r^2+18nr+4n+2r}\nonumber\\
    &=\sum_{n\geq 1}q^{6n^2+4n}+\sum_{n,r\geq 1}q^{6n^2+6r^2+18nr+4n+2r}, \label{F11}\\
   & F_{2,2}(q)=\sum_{n,r\geq 1}\frac{q^{18nr-3n-3r}(-q^{6r^2-5r+1})(-q^{6n^2-5n+1}+q^{6n^2+n})}{1-q^{6n-1}}\nonumber\\
    &=\sum_{n,r\geq 1}q^{6n^2+6r^2+18nr-8r-8n+2}=\sum_{\substack{n\geq 0\\r\geq 1}}q^{6n^2+6r^2+18nr+4n+10r}\nonumber\\
    &=\sum_{r\geq 1}q^{6r^2+10r}+\sum_{n,r\geq 1}q^{6n^2+6r^2+18nr+4n+10r}. \label{F12}
\end{align}
Substituting \eqref{A-exp}--\eqref{F12} into \eqref{S=A+B+F} and replacing $q$ by $q^{\frac{1}{2}}$, we deduce that
\begin{align}\label{S-wS}
S(q^{\frac{1}{2}})=1+\sum_{n=1}^\infty (q^{3n^2+5n}+2q^{3n^2+2n}+2q^{3n^2+n}-2q^{3n^2-n}-q^{3n^2-2n})+\widetilde{S}(q),
\end{align}
where
\begin{align}\label{wS-exp}
&\widetilde{S}(q):=\sum_{n,r\geq 1} q^{3n^2+3r^2+9nr}\Big(q^{n+r}+2q^{n+2r}+q^{2n+5r}-q^{-n-r}-2q^{-n-2r}-q^{-2n-5r}\Big) \nonumber \\
&=\Big(\sum_{n,r\geq 1}-\sum_{n,r\leq -1}\Big) q^{3n^2+9nr+3r^2+n+r}+2\Big(\sum_{n,r\geq 1}-\sum_{n,r\leq -1}\Big)q^{3n^2+9nr+3r^2+n+2r} \nonumber \\
&\quad +\Big(\sum_{n,r\geq 1}-\sum_{n,r\leq -1}\Big)q^{3n^2+9nr+3r^2+2n+5r}  \nonumber \\
&=\Big(f_{2,3,2}(-q^4,-q^4,q^3)-\sum_{n=1}^\infty q^{3n^2+n}-\sum_{r=1}^\infty q^{3r^2+r}-1\Big)\nonumber \\
&\quad +2\Big(f_{2,3,2}(-q^4,-q^5,q^3)-\sum_{n=1}^\infty q^{3n^2+n}-\sum_{r=1}^\infty q^{3r^2+2r}-1\Big) \nonumber \\
&\quad +\Big(f_{2,3,2}(-q^5,-q^8,q^3)-\sum_{n=1}^\infty q^{3n^2+2n}-\sum_{r=1}^\infty q^{3r^2+5r}-1\Big).
\end{align}
Substituting \eqref{wS-exp} into \eqref{S-wS}, we deduce that
\begin{align}\label{S-S1S2}
&S(q^{\frac{1}{2}})=S_1(q)+S_2(q),
\end{align}
where
\begin{align}
&S_1(q)=f_{2,3,2}(-q^4,-q^4,q^3)+2f_{2,3,2}(-q^4,-q^5,q^3)+f_{2,3,2}(-q^5,-q^8,q^3), \label{S1-defn}\\
&S_2(q)=-3-2\sum_{n=1}^\infty (q^{3n^2+n}+q^{3n^2-n})-\sum_{n=1}^\infty (q^{3n^2-2n}+q^{3n^2+2n}) \nonumber \\
&=-2\sum_{n=-\infty}^\infty q^{3n^2+n}-\sum_{n=-\infty}^\infty q^{3n^2+2n}\nonumber\\
&=-2\overline{J}_{2,6}-\overline{J}_{1,6}. \label{S2-defn}
\end{align}
This proves \eqref{S-lem}.

(2) Applying Lemma \ref{lem-lovejoy} with the two Bailey pairs in \eqref{BP3} and \eqref{BP2}, we deduce that
\begin{align}\label{T-start}
   & T(q)=(q;q)_{\infty}^2\sum_{n,s\geq 0}q^{2ns+n+3s}\beta_n(q^2;q)\beta_s(q;q)\nonumber\\
    &=(1-q)(1-q^2)\sum_{n,r\geq 0}\frac{q^{2nr+3n+r}(1-q)\alpha_n(q;q)\alpha_r(q^2;q)}{1-q^{2n+1}}.
\end{align}
We decompose $T(q)$ into three parts according to $n=0$, $n\geq 1$ and $r=0$, and $n,r\geq 1$, respectively. Namely, we define
\begin{align}
    C(q)&:=(1-q)(1-q^2)\sum_{r\geq 0}q^r\alpha_r(q^2;q),\\
    D(q)&:=(1-q)(1-q^2)\sum_{n\geq 1}\frac{q^{3n}\alpha_n(q;q)}{1-q^{2n+1}},\\
    G_{a,b}(q)&:=(1-q)^2(1-q^2)\sum_{\substack{n,r\geq 1\\ n\equiv a\ (\mathrm{mod}\ 3)\\r\equiv b\ (\mathrm{mod}\ 3)}}\frac{q^{2nr+3n+r}\alpha_r(q^2,q)\alpha_n(q;q)}{1-q^{2n+1}}.
\end{align}
Then \eqref{T-start} can be written as
\begin{align}\label{T-CDG}
    T(q)=C(q)+D(q)+\sum_{0\leq a,b\leq 2}G_{a,b}(q).
\end{align}
Now we calculate the series $C(q)$, $D(q)$ and $G_{a,b}(q)$ separately. We have
\begin{align}\label{C-exp}
  &C(q)=1-q^2+\sum_{r\geq 1}(q^{6r^2+4r}-q^{6r^2+10r+2})+\sum_{r\geq 0}(q^{6r^2+14r+6}-q^{6r^2+8r+2})\nonumber\\
    &=1-q^2+\sum_{r\geq 1}(q^{6r^2+4r}+q^{6r^2+2r-2}-q^{6r^2+10r+2}-q^{6r^2-4r}).
\end{align}
Here we replaced $r$ by $r-1$ for the second sum.

For the series $D(q)$, we have
\begin{align}\label{D-exp}
   & D(q)=(1-q^2)\Big(\sum_{n\geq 1}\frac{q^{9n}(q^{6n^2-n}-q^{6n^2+5n+1})}{1-q^{6n+1}}  +\sum_{n\geq 1}\frac{q^{9n-3}(-q^{6n^2-5n+1}+q^{6n^2+n})}{1-q^{6n-1}}\Big)\nonumber\\
    &=(1-q^2)\sum_{n\geq 1}(q^{6n^2+8n}-q^{6n^2+4n-2}).
\end{align}

For the series $G_{a,b}(q)$, we have
\begin{align}
 &G_{1,0}(q)=G_{1,1}(q)=G_{1,2}(q)=G_{0,2}(q)=G_{2,2}(q)=0, \label{G-zero} \\
   & G_{0,0}(q)=\sum_{n,r\geq 1}\frac{q^{18nr+9n+3r}(q^{6n^2-n}-q^{6n^2+5n+1})(q^{6r^2+r}-q^{6r^2+7r+2})}{1-q^{6n+1}}\nonumber\\
    &=\sum_{n,r\geq 1}q^{6n^2+6r^2+18nr+8n+4r}(1-q^{6r+2}), \label{G00}\\
   & G_{0,1}(q)=\sum_{\substack{n\geq 1\\r\geq 0}}\frac{q^{18nr+15n+3r+1}(q^{6n^2-n}-q^{6n^2+5n+1})(-q^{6r^2+5r+1}+q^{6r^2+11r+5})}{1-q^{6n+1}}\nonumber\\
    &=-\sum_{\substack{n\geq 1\\r\geq 0}}q^{6n^2+6r^2+18nr+14n+8r+2}(1-q^{6r+4}) \nonumber \\
    &=-\sum_{n,r\geq 1} q^{6n^2+6r^2+18nr-4n-4r}(1-q^{6r-2}), \label{G01}\\
   & G_{2,0}(q)=\sum_{n,r\geq 1}\frac{q^{18nr+9n-3r-3}(-q^{6n^2-5n+1}+q^{6n^2+n})(q^{6r^2+r}-q^{6r^2+7r+2})}{1-q^{6n-1}}\nonumber\\
    &=-\sum_{n,r\geq 1}q^{6n^2+6r^2+18nr+4n-2r-2}(1-q^{6r+2}), \label{G20}\\
    &G_{2,1}(q)=\sum_{\substack{n\geq 1\\r\geq 0}}\frac{q^{18nr+15n-3r-4}(-q^{6n^2-5n+1}+q^{6n^2+n})(-q^{6r^2+5r+1}+q^{6r^2+11r+5})}{1-q^{6n-1}}\nonumber\\
    &=\sum_{\substack{n\geq 1\\r\geq 0}}q^{6n^2+6r^2+18nr+10n+2r-2}(1-q^{6r+4}) \nonumber \\
    &=\sum_{n,r\geq 1} q^{6n^2+6r^2+18nr-8n-10r+2}(1-q^{6r-2})\label{G21}
\end{align}

Substituting \eqref{C-exp}--\eqref{G21} into \eqref{T-CDG} and replacing $q$ by $q^{\frac{1}{2}}$, we deduce that
\begin{align}\label{T-wT}
&T(q^{\frac{1}{2}})=1-q+\sum_{r\geq 1}\Big(q^{3r^2+2r}+q^{3r^2+r-1}-q^{3r^2+5r+1}-q^{3r^2-2r} \nonumber \\
&\quad +(1-q)(q^{3r^2+4r}-q^{3r^2+2r-1})\Big)+\widetilde{T}(q),
\end{align}
where
\begin{align}
  &  \widetilde{T}(q)=\sum_{n,r\geq 1} q^{3n^2+3r^2+9nr+2r+4n}(1-q^{3r+1}) -\sum_{n,r\geq 1} q^{3n^2+3r^2+9nr-2n-2r}(1-q^{3r-1})\nonumber \\
    & \quad -\sum_{n,r\geq 1}q^{3n^2+3r^2+9nr-r+2n-1}(1-q^{3r+1})  +\sum_{n,r\geq 1} q^{3n^2+3r^2+9nr-4n-5r+1}(1-q^{3r-1}).
\end{align}
Regrouping the terms and replacing $(n,r)$ by $(-n,-r)$ for some of the sums, we have
\begin{align}
&\widetilde{T}(q)
=\Big(\sum_{n,r\geq 1}-\sum_{n,r\leq -1}\Big)q^{3n^2+3r^2+9nr+2r+4n}-
q\Big(\sum_{n,r\geq 1}-\sum_{n,r\leq -1}\Big)q^{3n^2+3r^2+9nr+5r+4n} \nonumber \\
&\quad +q^{-1}\Big(\sum_{n,r\geq 1}-\sum_{n,r\leq -1}\Big)q^{3n^2+3r^2+9nr-2n+r}+\Big(\sum_{n,r\geq 1}-\sum_{n,r\leq -1}\Big)q^{3n^2+3r^2+9nr+2n+2r} \nonumber \\
&=\Big(f_{2,3,2}(-q^7,-q^5,q^3)-\sum_{n=1}^\infty q^{3n^2+4n}-\sum_{r=1}^\infty q^{3r^2+2r}-1\Big) \nonumber \\
&\quad -q\Big(f_{2,3,2}(-q^7,-q^8,q^3)-\sum_{n=1}^\infty q^{3n^2+4n}-\sum_{r=1}^\infty q^{3r^2+5r}-1  \Big) \nonumber \\
&\quad +q^{-1}\Big(f_{2,3,2}(-q,-q^4,q^3)-\sum_{n=1}^\infty q^{3n^2-2n}-\sum_{r=1}^\infty q^{3r^2+r}-1 \Big) \nonumber \\
&\quad +\Big(f_{2,3,2}(-q^5,-q^5,q^3)-\sum_{n=1}^\infty q^{3n^2+2n}-\sum_{r=1}^\infty q^{3r^2+2r}-1  \Big) \label{wT-exp}
\end{align}
Substituting \eqref{wT-exp} into \eqref{T-wT}, we deduce that
\begin{align}\label{T-T1T2}
T(q^{\frac{1}{2}})=T_1(q)+T_2(q)
\end{align}
where
\begin{align}
    T_1(q)&=f_{2,3,2}(-q^7,-q^5,q^3)-qf_{2,3,2}(-q^7,-q^8,q^3)+q^{-1}f_{2,3,2}(-q,-q^4,q^3) \nonumber \\
    &\quad \quad +f_{2,3,2}(-q^5,-q^5,q^3), \label{T1-start}\\
    T_2(q)&=-\sum_{n=-\infty}^\infty q^{3n^2+2n}-\sum_{n=-\infty}^\infty q^{3n^2-2n-1}=-(1+q^{-1})\overline{J}_{1,6}. \label{T2-start}
\end{align}
This proves \eqref{T-lem}.
\end{proof}

Starting from Lemma \ref{lem-ST}, we have two different ways to complete the proof of Theorem \ref{thm-main}. The first method is to transform Hecke-type series to Appell--Lerch sums, and then show that they agree with the desired modular products. The second method is to transform the Hecek-type series to some other Hecke-type series whose modularity are known. Though the second proof is shorter, the method in the first proof exhibits greater versatility.

For the first proof, we establish some auxiliary identities to express certain combinations of $m(x,q,z)$ as infinite products.
\begin{lemma}\label{lem-W}
We have
\begin{align}
&W_1(q):=m(-q^{14},q^{30},q^6)+m(-q^{16},q^{30},q^{-8}) \nonumber \\
&=m(-q^{14},q^{30},q^6)+1-m(-q^{14},q^{30},q^8) =1-q^6\frac{J_{30}^3J_{2,30}\overline{J}_{28,30}}{J_{8,30}J_{6,30}\overline{J}_{22,30}\overline{J}_{20,30}}, \label{W1} \\
&W_2(q):=m(-q^{14},q^{30},q^{-6})+m(-q^{16},q^{30},q^{-8}) \nonumber \\
&=m(-q^{14},q^{30},q^{-6})+1-m(-q^{14},q^{30},q^8)=1+\frac{J_{30}^3J_{14,30}\overline{J}_{16,30}}{J_{8,30}J_{6,30}\overline{J}_{22,30}\overline{J}_{8,30}}, \label{W2} \\
&W_3(q):=q^{-1}m(-q^{-1},q^{30},q^6)+m(-q,q^{30},q^{-8}) \nonumber \\
&=-m(-q,q^{30},q^{-6})+m(-q,q^{30},q^{-8})=-q^5\frac{J_{30}^3J_{2,30}\overline{J}_{13,30}}{J_{6,30}J_{8,30}\overline{J}_{5,30}\overline{J}_{7,30}}, \label{W3} \\
&W_4(q):=q^{-1}m(-q^{-1},q^{30},q^{-6})+m(-q,q^{30},q^{-8}) \nonumber \\
&=-m(-q,q^{30},q^{6})+m(-q,q^{30},q^{-8})=q^6\frac{J_{30}^3J_{14,30}\overline{J}_{1,30}}{J_{6,30}{J}_{8,30}\overline{J}_{7,30}^2}, \label{W4} \\
&W_5(q):=q^{-4}m(-q^{-4},q^{30},q^8)+m(-q^4,q^{30},q^{12}) \nonumber \\
&=-m(-q^4,q^{30},q^{-8})+m(-q^4,q^{30},q^{12})=-q^{4}\frac{J_{30}^3J_{20,30}\overline{J}_{8,30}}{J_{8,30}J_{12,30}\overline{J}_{4,30}\overline{J}_{16,30}}, \label{W5} \\
&W_6(q):=q^{-4}m(-q^{-4},q^{30},q^8)+m(-q^4,q^{30},q^{-12}) \nonumber \\
& =-m(-q^4,q^{30},q^{-8})+m(-q^4,q^{30},q^{-12})=-q^{4}\frac{J_{30}^3J_{4,30}\overline{J}_{16,30}}{J_{8,30}J_{12,30}\overline{J}_{4,30}\overline{J}_{8,30}}, \label{W6} \\
&W_7(q):=m(-q^{11},q^{30},q^8)+m(-q^{19},q^{30},q^{-12})  \nonumber \\
&=m(-q^{11},q^{30},q^8)+1-m(-q^{11},q^{30},q^{12})=1-q^{7}\frac{J_{30}^3J_{4,30}\overline{J}_{1,30}}{J_{8,30}J_{12,30}\overline{J}_{19,30}\overline{J}_{23,30}}, \label{W7} \\
&W_8(q):=m(-q^{11},q^{30},q^8)+q^{-11}m(-q^{-11},q^{30},q^{12}) \nonumber \\
&=m(-q^{11},q^{30},q^8)-m(-q^{11},q^{30},q^{-12})=-q\frac{J_{30}^3J_{20,30}\overline{J}_{7,30}}{J_{12,30}J_{8,30}\overline{J}_{1,30}\overline{J}_{19,30}}. \label{W8}
\end{align}
\end{lemma}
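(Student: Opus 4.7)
The plan is to establish all eight identities via a uniform two-step procedure. In each $W_i$, I first use the elementary $m$-function transformations from Lemma~\ref{lem-m-prop} to rewrite the pair of Appell--Lerch sums so that they share the same first two arguments $(x, q^{30})$; this produces the second equality in each displayed line. Then, with both terms now of the form $m(x, q^{30}, z_1)$ and $m(x, q^{30}, z_0)$ (possibly shifted by $1$ or differing by a sign), the difference formula of Lemma~\ref{lem-m-minus} collapses the result into a single quotient of theta functions, which I then identify with the claimed infinite product on the right.

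The choice of transformation in the first step depends on the shape of the prefactor. For $W_1$, $W_2$, and $W_7$, where there is no prefactor, I apply \eqref{m-id-4}, namely $m(x, q, z^{-1}) = 1 - m(q/x, q, z)$, to the second summand; for example, $m(-q^{16}, q^{30}, q^{-8}) = 1 - m(-q^{14}, q^{30}, q^{8})$ immediately puts $W_1$ into the form $m(-q^{14}, q^{30}, q^6) - m(-q^{14}, q^{30}, q^{8}) + 1$. For $W_3, W_4, W_5, W_6$, and $W_8$, the negative powers of $q$ in the prefactors suggest using \eqref{m-id-2}, namely $m(x, q, z) = x^{-1} m(x^{-1}, q, z^{-1})$, to absorb them; for instance, $q^{-1}m(-q^{-1}, q^{30}, q^6) = -m(-q, q^{30}, q^{-6})$ recasts $W_3$ as a difference of two $m$-sums with common first argument $-q$.

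After step one, Lemma~\ref{lem-m-minus} with modulus $q^{30}$ yields the quotient
\[
\frac{z_0 J_{30}^3 \, j(z_1/z_0; q^{30})\, j(xz_0z_1; q^{30})}{j(z_0; q^{30}) j(z_1; q^{30}) j(xz_0; q^{30}) j(xz_1; q^{30})}.
\]
The final simplification requires pulling shifted arguments back to canonical form using the standard identities $j(q^{30}z; q^{30}) = -z^{-1} j(z; q^{30})$ and $j(q^{30}/z; q^{30}) = j(z; q^{30})$ (valid for both $J$ and $\overline{J}$ variants), so that every theta factor becomes either $J_{a, 30}$ or $\overline{J}_{a, 30}$ with $0 \le a \le 30$. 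The powers of $q$ collected from these shifts, together with the leading $z_0$ from the lemma, should combine to produce precisely the monomial coefficient stated in each identity.

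The principal obstacle is clerical rather than conceptual: careful bookkeeping of signs and $q$-exponents from the theta shifts, and verifying in each of the eight cases that the final monomial prefactor matches the one asserted. Once the correct transformation from Lemma~\ref{lem-m-prop} has been selected for each $W_i$, the remainder is a mechanical application of Lemmas~\ref{lem-m-prop} and~\ref{lem-m-minus} together with the two theta shift rules above.
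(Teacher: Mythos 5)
Your proposal is correct and follows essentially the same route as the paper: the authors likewise obtain the first equalities in \eqref{W1}, \eqref{W2}, \eqref{W7} from \eqref{m-id-4} and those in \eqref{W3}--\eqref{W6}, \eqref{W8} from \eqref{m-id-2}, then apply \eqref{m-minus} and normalize the theta factors. Your identification of which transformation applies to which $W_i$ matches the paper exactly.
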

\begin{proof}
By \eqref{m-id-4} we have
\begin{align}
&W_1(q)=m(-q^{14},q^{30},q^6)+1-m(-q^{14},q^{30},q^8)   \nonumber \\
&=1+q^8\frac{J_{30}^3j(q^{-2};q^{30})j(-q^{28};q^{30}) }{j(q^8;q^{30})j(q^6;q^{30})j(-q^{22};q^{30})j(-q^{20};q^{30})}.
\end{align}
Here the last equality follows from \eqref{m-minus} with $(x,q,z_0,z_1)\rightarrow (-q^{14},q^{30},q^8,q^6)$. This proves \eqref{W1}.

Similarly, the first equalities in \eqref{W2} and \eqref{W7} follow from \eqref{m-id-4}, and the first equalities in \eqref{W3}--\eqref{W6} and \eqref{W8} follow from \eqref{m-id-2}. The second equalities in \eqref{W2}--\eqref{W8} follow directly from \eqref{m-minus}.
\end{proof}

\begin{lemma}\label{lem-M}
We have
\begin{align}
  & M_1(q):= m(-q^{17},q^{30},q^2)+m(-q^{13},q^{30},q^6) \nonumber \\
  &=1-m(-q^{13},q^{30},q^{-2})+m(-q^{13},q^{30},q^6)=1-\frac{J_{30}^3J_{8,30}\overline{J}_{17,30}}{J_{2,30}J_{6,30}\overline{J}_{11,30}\overline{J}_{19,30}}, \label{M1} \\
 &  M_2(q):= m(-q^7,q^{30},q^{-2})+q^{-7}m(-q^{-7},q^{30},q^{12}) \nonumber \\
  &=m(-q^7,q^{30},q^{-2})-m(-q^7,q^{30},q^{-12})=\frac{J_{30}^3J_{10,30}\overline{J}_{7,30}}{J_{2,30}J_{12,30}\overline{J}_{5,30}^2},  \label{M2} \\
  & M_3(q):= m(-q^2,q^{30},q^2)+q^{-2}m(-q^{-2},q^{30},q^6) \nonumber \\
  &=m(-q^2,q^{30},q^2)-m(-q^2,q^{30},q^{-6})=-q^2\frac{J_{30}^3J_{8,30}\overline{J}_{2,30}}{J_{2,30}J_{6,30}\overline{J}_{4,30}^2},  \label{M3} \\
 & M_4(q):=  q^{-8}m(-q^{-8},q^{30},q^{-2})+m(-q^8,q^{30},q^{12}) \nonumber \\
 &=-m(-q^8,q^{30},q^2)+m(-q^8,q^{30},q^{12})=q^2\frac{J_{30}^3J_{10,30}\overline{J}_{22,30}}{J_{2,30}J_{12,30}\overline{J}_{10,30}^2}, \label{M4} \\
 &M_5(q):=-m(-q^8,q^{30},q^8)+m(-q^8,q^{30},q^{-12})=q^4\frac{J_{30}^3J_{20,30}\overline{J}_{4,30}}{J_{8,30}J_{12,30}\overline{J}_{4,30}\overline{J}_{16,30}},  \label{M5} \\
 &M_6(q):=-m(-q^{13},q^{30},q^{-8})+m(-q^{13},q^{30},q^{-6})=q^5\frac{J_{30}^3J_{2,30}\overline{J}_{1,30}}{J_{6,30}J_{8,30}\overline{J}_{5,30}\overline{J}_{7,30}}, \label{M6} \\
 &M_7(q):=-q^{-7}m(-q^{-7},q^{30},q^8)+m(-q^{23},q^{30},q^{-12}) \nonumber \\
 &=1-m(-q^{23},q^{30},q^8)+m(-q^{23},q^{30},q^{-12})=1+q\frac{J_{30}^3J_{20,30}\overline{J}_{19,30}}{J_{8,30}J_{12,30}\overline{J}_{1,30}\overline{J}_{11,30}},  \label{M7} \\
& M_8(q):=-m(-q^{-2},q^{30},q^{-8})+m(-q^{-2},q^{30},q^{-6})=q^8\frac{J_{30}^3J_{2,30}\overline{J}_{16,30}}{J_{6,30}J_{8,30}\overline{J}_{10,30}\overline{J}_{8,30}}. \label{M8}
\end{align}
\end{lemma}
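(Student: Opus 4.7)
The plan is to prove Lemma \ref{lem-M} by following the exact template used for Lemma \ref{lem-W}. Each $M_i(q)$ is a two-term combination of Appell--Lerch sums at base $q^{30}$; the proof of each consists of two cosmetic rewritings, glued by a single application of Lemma \ref{lem-m-minus}. I will first use the identities in Lemma \ref{lem-m-prop} to rewrite $M_i(q)$ as a difference of two Appell--Lerch sums with the same first argument $x$, and then invoke \eqref{m-minus} to express that difference as a theta quotient, which after standardizing the theta factors will match the claimed product.

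For the first equality in each identity, the normalization goes as follows. In $M_1$ and $M_7$, the two terms carry first arguments $-q^{a}$ and $-q^{30-a}$ with $(-q^a)(-q^{30-a})=q^{30}$, so applying \eqref{m-id-4} to the second summand in the form $m(q^{30}/x,q^{30},z^{-1})=1-m(x,q^{30},z)$ installs the common first argument and produces the additive constant $1$ seen in \eqref{M1} and \eqref{M7}. In $M_2$, $M_3$, $M_4$, and in the first summand of $M_7$, one term has the shape $x^{-1}m(x^{-1},q^{30},z)$; invoking \eqref{m-id-2} in the equivalent form $x^{-1}m(x^{-1},q^{30},z)=m(x,q^{30},z^{-1})$ (derived by substituting $z\mapsto z^{-1}$ in \eqref{m-id-2}) inverts the first argument at the cost of inverting $z$. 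The identities $M_5$, $M_6$, $M_8$ already present themselves as differences with a common $x$ and need no such preprocessing.

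For the second equality, I apply Lemma \ref{lem-m-minus} with $(x,q,z_0,z_1)$ read off from the rewritten difference. This yields
\begin{align*}
m(x,q^{30},z_1)-m(x,q^{30},z_0)=\frac{z_0J_{30}^3\,j(z_1/z_0;q^{30})\,j(xz_0z_1;q^{30})}{j(z_0;q^{30})\,j(z_1;q^{30})\,j(xz_0;q^{30})\,j(xz_1;q^{30})}.
\end{align*}
In general the resulting theta factors $j(\pm q^a;q^{30})$ will have exponents $a$ outside the canonical window $0\le a\le 15$, so I would then use the symmetry $j(z;q)=j(q/z;q)$ (so that $\overline{J}_{a+30,30}=\overline{J}_{-a,30}$, etc.) together with the quasi-periodicity $j(q^nz;q)=(-z)^{-n}q^{-\binom{n}{2}}j(z;q)$ to rewrite each $j$ as a standard $J_{a,30}$ or $\overline{J}_{a,30}$. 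Collecting the signs and powers of $q$ produced by these shifts against the leading $z_0$ on the right-hand side should reproduce the exact prefactor stated in each of \eqref{M1}--\eqml.

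The main obstacle is purely bookkeeping: matching signs, the factors of $q$ emitted by the quasi-periodicity shifts, and the constant term $1$ coming from \eqref{m-id-4} in those cases where it appears. There is no fresh ingredient beyond what was already used to prove Lemma \ref{lem-W}; accordingly, each of the eight lines of Lemma \ref{lem-M} should be dispatched by a short verification whose template is: apply \eqref{m-id-2} or \eqref{m-id-4} to equalize the first arguments, apply \eqref{m-minus}, and then simplify the eight theta factors to the canonical $J$-and-$\overline{J}$ form.
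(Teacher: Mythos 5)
Your proposal is correct and follows essentially the same route as the paper: normalize each pair to a difference of Appell--Lerch sums with a common first argument via Lemma \ref{lem-m-prop} (and \eqref{m-id-4}), then apply \eqref{m-minus} and reduce the theta factors to standard $J_{a,30}$, $\overline{J}_{a,30}$ form. The only cosmetic difference is that for the first summand of $M_7$ the paper uses \eqref{m-id-3} in one step, whereas you route it through \eqref{m-id-2} followed by \eqref{m-id-4}; both give the same intermediate expression.
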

\begin{proof}
The first equalities in \eqref{M1} and \eqref{M7} follow from \eqref{m-id-4} and \eqref{m-id-3}, respectively.  The first equalities in \eqref{M2}--\eqref{M4} follow from \eqref{m-id-2}. The remaining equalities all follow directly from \eqref{m-minus}.
\end{proof}

Based on Lemmas \ref{lem-ST}--\ref{lem-M}, we are now ready to give our first proof for Theorem \ref{thm-main}.
\begin{proof}[First Proof of Theorem \ref{thm-main}]
(1) Applying Lemma \ref{f-m-expan} with $\ell=1$, we deduce that
\begin{align}
&f_{2,3,2}(-q^4,-q^4,q^3)=\overline{J}_{2,6}m(-q^{14},q^{30},q^6)+\overline{J}_{2,6}m(-q^{14},q^{30},q^{-6}) \nonumber \\
&+q^{-4}\overline{J}_{1,6}m(-q^{-1},q^{30},q^6)+q^{-4}\overline{J}_{1,6}m(-q^{-1},q^{30},q^{-6}),  \label{f-1}\\
&f_{2,3,2}(-q^4,-q^5,q^3)=\overline{J}_{1,6}m(-q^{11},q^{30},q^8)+\overline{J}_{2,6}m(-q^{16},q^{30},q^{-8}) \nonumber \\
&+q^{-6}\overline{J}_{2,6}m(-q^{-4},q^{30},q^8)+q^{-3}\overline{J}_{1,6}m(-q,q^{30},q^{-8}),  \label{f-2}\\
&f_{2,3,2}(-q^5,-q^8,q^3)=q^{-2}\overline{J}_{2,6}m(-q^4,q^{30},q^{12})+\overline{J}_{1,6}m(-q^{19},q^{30},q^{-12}) \nonumber  \\
&+q^{-11}\overline{J}_{1,6}m(-q^{-11},q^{30},q^{12})+q^{-2}\overline{J}_{2,6}m(-q^4,q^{30},q^{-12}). \label{f-3}
\end{align}
Substituting \eqref{f-1}--\eqref{f-3} into \eqref{S1-defn} and collecting similar terms, we have
\begin{align}\label{S1-W}
S_1(q)&=\overline{J}_{2,6}\Big(W_1(q)+W_2(q)+q^{-2}W_5(q)+q^{-2}W_6(q)\Big) \nonumber \\
& \quad +\overline{J}_{1,6}\Big(q^{-3}W_3(q)+q^{-3}W_4(q)+W_7(q)+W_8(q)\Big).
\end{align}
Substituting \eqref{W1}--\eqref{W8} into \eqref{S1-W}, and then substituting the result and \eqref{S2-defn} into \eqref{S-S1S2}, we deduce that
\begin{align}\label{S-proof-product}
S(q^{\frac{1}{2}})&=\overline{J}_{2,6}\Big(-q^6\frac{J_{30}^3J_{2,30}\overline{J}_{28,30}}{J_{8,30}J_{6,30}\overline{J}_{22,30}\overline{J}_{20,30}}+ \frac{J_{30}^3J_{14,30}\overline{J}_{16,30}}{J_{8,30}J_{6,30}\overline{J}_{22,30}\overline{J}_{8,30}} \nonumber \\
&\quad -q^{2}\frac{J_{30}^3J_{20,30}\overline{J}_{8,30}}{J_{8,30}J_{12,30}\overline{J}_{4,30}\overline{J}_{16,30}} -q^{2}\frac{J_{30}^3J_{4,30}\overline{J}_{16,30}}{J_{8,30}J_{12,30}\overline{J}_{4,30}\overline{J}_{8,30}}\Big) \nonumber \\
&\quad +\overline{J}_{1,6}\Big(-q^2\frac{J_{30}^3J_{2,30}\overline{J}_{13,30}}{J_{6,30}J_{8,30}\overline{J}_{5,30}\overline{J}_{7,30}}+ q^3\frac{J_{30}^3J_{14,30}\overline{J}_{1,30}}{J_{6,30}{J}_{8,30}\overline{J}_{7,30}^2} \nonumber \\
&\quad -q^{7}\frac{J_{30}^3J_{4,30}\overline{J}_{1,30}}{J_{8,30}J_{12,30}\overline{J}_{19,30}\overline{J}_{23,30}}-q\frac{J_{30}^3J_{20,30}\overline{J}_{7,30}}{J_{12,30}J_{8,30}\overline{J}_{1,30}\overline{J}_{19,30}} \Big).
\end{align}
Using the Maple approach in \cite{Frye-Garvan}, it is easy and straightforward to prove \eqref{S-product} from \eqref{S-proof-product}. Therefore we prove \eqref{eq-conj-1}.

(2) Applying Lemma \ref{f-m-expan} with $\ell=1$, we deduce that
\begin{align}
&f_{2,3,2}(-q^7,-q^5,q^3)=\overline{J}_{1,6}m(-q^{17},q^{30},q^2)+q^{-1}\overline{J}_{1,6}m(-q^7,q^{30},q^{-2}) \nonumber \\
&\qquad +q^{-3}\overline{J}_{2,6}m(-q^2,q^{30},q^2)+q^{-9}\overline{J}_{2,6}m(-q^{-8},q^{30},q^{-2}), \label{f-4} \\
&f_{2,3,2}(-q^7,-q^8,q^3)=q^{-2}\overline{J}_{2,6}m(-q^8,q^{30},q^8)+q^{-1}\overline{J}_{1,6}m(-q^{13},q^{30},q^{-8}) \nonumber \\
&\qquad +q^{-9}\overline{J}_{5,6}m(-q^{-7},q^{30},q^8)+q^{-6}\overline{J}_{2,6}m(-q^{-2},q^{30},q^{-8}),  \label{f-5} \\
&f_{2,3,2}(-q,-q^4,q^3)=\overline{J}_{2,6}m(-q^8,q^{30},q^{12})+\overline{J}_{1,6}m(-q^{23},q^{30},q^{-12})\nonumber \\
&\qquad +q^{-7}\overline{J}_{1,6}m(-q^{-7},q^{30},q^{12})+\overline{J}_{2,6}m(-q^8,q^{30},q^{-12}),  \label{f-6}\\
&f_{2,3,2}(-q^5,-q^5,q^3)=\overline{J}_{1,6}m(-q^{13},q^{30},q^6)+\overline{J}_{1,6}m(-q^{13},q^{30},q^{-6})\nonumber \\
&\qquad +q^{-5}\overline{J}_{2,6}m(-q^{-2},q^{30},q^6)+q^{-5}\overline{J}_{2,6}m(-q^{-2},q^{30},q^{-6}). \label{f-7}
\end{align}
Substituting \eqref{f-4}--\eqref{f-7} into \eqref{T1-start}, and then collecting similar terms, we deduce that
\begin{align}\label{T1-M}
    T_1(q)&=\overline{J}_{1,6}\Big(M_1(q)+q^{-1}M_2(q)+M_6(q)+q^{-1}M_7(q)\Big) \nonumber \\
    &\quad +\overline{J}_{2,6}\Big(q^{-3}M_3(q)+q^{-1}M_4(q)+q^{-1}M_5(q)+q^{-5}M_8(q) \Big).
\end{align}
Substituting \eqref{M1}--\eqref{M8} into \eqref{T1-M}, and then substituting the result and \eqref{T2-start} into \eqref{T-T1T2}, we deduce that
\begin{align}\label{T-proof-product}
    T(q^{\frac{1}{2}})&=\overline{J}_{1,6}\Big(-\frac{J_{30}^3J_{8,30}\overline{J}_{17,30}}{J_{2,30}J_{6,30}\overline{J}_{11,30}\overline{J}_{19,30}}+q^{-1}\frac{J_{30}^3J_{10,30}\overline{J}_{7,30}}{J_{2,30}J_{12,30}\overline{J}_{5,30}^2} +q^5\frac{J_{30}^3J_{2,30}\overline{J}_{1,30}}{J_{6,30}J_{8,30}\overline{J}_{5,30}\overline{J}_{7,30}} \nonumber \\
    &\quad +\frac{J_{30}^3J_{20,30}\overline{J}_{19,30}}{J_{8,30}J_{12,30}\overline{J}_{1,30}\overline{J}_{11,30}} \Big)  +\overline{J}_{2,6}\Big(-q^{-1}\frac{J_{30}^3J_{8,30}\overline{J}_{2,30}}{J_{2,30}J_{6,30}\overline{J}_{4,30}^2}+q\frac{J_{30}^3J_{10,30}\overline{J}_{22,30}}{J_{2,30}J_{12,30}\overline{J}_{10,30}^2}
    \nonumber \\
    &\quad +q^3\frac{J_{30}^3J_{20,30}\overline{J}_{4,30}}{J_{8,30}J_{12,30}\overline{J}_{4,30}\overline{J}_{16,30}}+q^3\frac{J_{30}^3J_{2,30}\overline{J}_{16,30}}{J_{6,30}J_{8,30}\overline{J}_{10,30}\overline{J}_{8,30}}  \Big).
\end{align}

Using the Maple approach in \cite{Frye-Garvan}, it is easy and straightforward to prove \eqref{T-product} from \eqref{T-proof-product}. Therefore we prove \eqref{eq-conj-2}.
\end{proof}

It is possible to prove Theorem \ref{thm-main}
 without using $m(x,q,z)$. Here we give another proof starting from Lemma \ref{lem-ST}. This proof is based on the following identities in the work of Kim and Lovejoy \cite[pp. 759--760]{Kim-Lovejoy}:
\begin{align}
    \frac{(q;q)_\infty^2}{(q,q^4;q^5)_\infty}&=f_{6,9,6}(-q^5,-q^4,q)-qf_{6,9,6}(-q^7,-q^7,q)\nonumber \\
    & \quad -q^2f_{6,9,6}(-q^8,-q^{10},q)+q^4f_{6,9,6}(-q^{10},-q^{13},q), \label{KL-1} \\
    \frac{(q;q)_\infty^2}{(q^2,q^3;q^5)_\infty}&=f_{6,9,6}(-q^5,-q^5,q)-qf_{6,9,6}(-q^8,-q^7,q)\nonumber \\
    &\quad -qf_{6,9,6}(-q^7,-q^8,q)+q^3f_{6,9,6}(-q^{10},-q^{10},q). \label{KL-2}
\end{align}
By employing the Bailey machinery and the Andrews--Gordon identities,  Kim and Lovejoy \cite[Corollary 4.1]{Kim-Lovejoy} actually proved a more general formula which contains \eqref{KL-1} and \eqref{KL-2} as special instances.

\begin{proof}[Second proof of Theorem \ref{thm-main}]
(1) In view of \eqref{S-lem} and \eqref{KL-1} and using \eqref{f-base}, the desired identity \eqref{S-product} is equivalent to
\begin{align}\label{f-equivalent-1}
&f_{2,3,2}(-q^4,-q^4,q^3)+2f_{2,3,2}(-q^4,-q^5,q^3)+f_{2,3,2}(-q^5,-q^8,q^3)-\overline{J}_{1,6}-2\overline{J}_{2,6} \nonumber \\
&=f_{2,3,2}(-q^5,-q^4,q^3)-qf_{2,3,2}(-q^7,-q^7,q^3)\nonumber \\
    & \quad -q^2f_{2,3,2}(-q^8,-q^{10},q^3)+q^4f_{2,3,2}(-q^{10},-q^{13},q^3).
\end{align}
By \eqref{f-symmetry} we have
\begin{align}
    f_{2,3,2}(-q^4,-q^5,q^3)=f_{2,3,2}(-q^5,-q^4,q^3). \label{f-equal}
\end{align}

Setting $(a,b,c,x,y,q)\rightarrow (2,3,2,-q^4,-q^4,q^3)$ in \eqref{f-id-1} we deduce that
\begin{align}
    f_{2,3,2}(-q^4,-q^4,q^3)=q^4f_{2,3,2}(-q^{13},-q^{10},q^3)+\overline{J}_{2,6}. \label{2nd-1}
\end{align}
By \eqref{f-id-0} and \eqref{f-id-1} with $(a,b,c,x,y,q)\rightarrow (2,3,2,-q^4,-q^5,q^3)$ we deduce that
\begin{align}
   & f_{2,3,2}(-q^4,-q^5,q^3)+q^2f_{2,3,2}(-q^8,-q^{10},q^3) \nonumber \\
   &=f_{2,3,2}(-q^4,-q^5,q^3)-q^5f_{2,3,2}(-q^{13},-q^{11},q^3)=\overline{J}_{2,6}. \label{2nd-2}
\end{align}
By \eqref{f-id-0} and \eqref{f-id-1} with $(a,b,c,x,y,q)\rightarrow (2,3,2,-q^5,-q^8,q^3)$ we deduce that
\begin{align}
   & f_{2,3,2}(-q^5,-q^8,q^3)+qf_{2,3,2}(-q^7,-q^7,q^3) \nonumber \\
   &= f_{2,3,2}(-q^5,-q^8,q^3)-q^8f_{2,3,2}(-q^{14},-q^{14},q^3)=\overline{J}_{1,6}. \label{2nd-3}
\end{align}
Combining \eqref{f-equal}--\eqref{2nd-3}, we obtain \eqref{f-equivalent-1} and hence prove \eqref{S-product}.

(2) In view of \eqref{T-lem} and \eqref{KL-2} and using \eqref{f-base}, the identity \eqref{T-product} is equivalent to
\begin{align}
&f_{2,3,2}(-q^7,-q^5,q^3)+f_{2,3,2}(-q^5,-q^5,q^3)+q^{-1}f_{2,3,2}(-q,-q^4,q^3)\nonumber \\
&\quad -qf_{2,3,2}(-q^7,-q^8,q^3)-(1+q^{-1})\overline{J}_{1,6} \nonumber \\
&=f_{2,3,2}(-q^{5},-q^{5},q^3)-qf_{2,3,2}(-q^{8},-q^{7},q^3)\nonumber \\
    &\quad -qf_{2,3,2}(-q^{7},-q^{8},q^3)+q^3f_{2,3,2}(-q^{10},-q^{10},q^3).
\end{align}
Canceling the common terms on both sides reduces it to
\begin{align}\label{f-equivalent-2}
&f_{2,3,2}(-q^7,-q^5,q^3)+q^{-1}f_{2,3,2}(-q,-q^4,q^3)-(1+q^{-1})\overline{J}_{1,6} \nonumber \\
&=-qf_{2,3,2}(-q^7,-q^8,q^3)+q^3f_{2,3,2}(-q^{10},-q^{10},q^3).
\end{align}
By \eqref{f-id-1} with $(a,b,c,x,y,q)\rightarrow (2,3,2,-q,-q^4,q^3)$, we deduce that
\begin{align}\label{2nd-4}
    f_{2,3,2}(-q,-q^4,q^3)=q^4f_{2,3,2}(-q^{10},-q^{10},q^3)+\overline{J}_{1,6}.
\end{align}
By \eqref{f-id-0} and \eqref{f-id-1} with $(a,b,c,x,y,q)\rightarrow (2,3,2,-q^5,-q^7,q^3)$, we deduce that
\begin{align}\label{2nd-5}
   & f_{2,3,2}(-q^5,-q^7,q^3)=q^7f_{2,3,2}(-q^{14},-q^{13},q^3)+j(-q^5,q^6) \nonumber \\
    &=-qf_{2,3,2}(-q^7,-q^8,q^3)+\overline{J}_{1,6}.
\end{align}
Combining \eqref{2nd-4} and \eqref{2nd-5}, we obtain \eqref{f-equivalent-2} and hence prove \eqref{T-product}.
\end{proof}

\end{document}